\documentclass{article}
\usepackage[preprint]{Formatting_Instructions_For_NeurIPS_2026/neurips_2026}
\usepackage{amsmath,amsthm,amssymb}
\usepackage{graphicx}
\usepackage{booktabs}
\usepackage{tabularx}
\usepackage{algorithm}
\usepackage{xcolor}
\usepackage{algpseudocode}
\usepackage{hyperref}
\usepackage{cleveref}
\usepackage{import}
\newtheorem{theorem}{Theorem}[section]
\newtheorem{lemma}[theorem]{Lemma}
\newtheorem{definition}{Definition}[section]
\newtheorem{assumption}{Assumption}
\newtheorem{remark}{Remark}[section]
\newcommand{\R}{\mathbb{R}}
\newcommand{\N}{\mathbb{N}}
\DeclareMathOperator{\ri}{ri}
\DeclareMathOperator{\cl}{cl}
\DeclareMathOperator{\aff}{aff}
\DeclareMathOperator{\conv}{conv}
\newcommand{\norm}[1]{\left\|#1\right\|}
\newcommand{\inner}[2]{\langle #1,\, #2 \rangle}
\newcommand{\partialC}{\partial^C}
\newcommand{\partialL}{\partial^L}
\algnewcommand\algorithmicComment[1]{\textit{// #1}}
\algnewcommand{\COMMENT}[1]{\State \algorithmicComment{#1}}
\title{Achieving Directional-Stationarity from a Single Random Direction Step}
\author{Dan Greenstein \qquad Nadav Hallak\\[4pt]
Faculty of Data and Decision Sciences, Technion\\
\texttt{\{sdngreen, ndvhllk\}@technion.ac.il}}
\date{}
\begin{document}
\maketitle
\begin{abstract}
This paper addresses the challenge of obtaining strong optimality guarantees in constrained nonsmooth nonconvex optimization under mild regularity conditions, namely local Lipschitz continuity and existence and continuity of directional derivatives. 
While standard methods typically ensure weak stationarity notions, achieving directional (d-)stationarity remains nontrivial.
We show that a random direction exploration step is sufficient to attain d-stationarity. 
The proposed approach augments any base optimization method with a single exploration step that samples a direction and step size and accepts the candidate based on a function value comparison.
The resulting scheme guarantees that all accumulation points are d-stationary almost surely, independently of the behavior of the underlying method. 
Moreover, it preserves convergence rates of the base method, as established for DCA and prox-linear-type schemes.
The theoretical results are complemented by numerical experiments illustrating the effect and guarantees of the exploration step.
\end{abstract}

\section{Introduction}\label{sec:intro}
\subsection{Problem formulation}
We consider the constrained nonsmooth nonconvex optimization problem
\begin{equation}\label{eq:main}
  \min_{x \in C} h(x),
\end{equation}
where $h \colon \mathbb{R}^n \to \mathbb{R}$ is locally Lipschitz and directionally differentiable, and $C \subseteq \mathbb{R}^n$ is a nonempty closed convex set. 
This setting covers a broad class of problems, including smooth-minus-convex objectives $h = f - g$ with $f \in C^1$ and $g$ convex, as well as the difference-of-convex (DC) class where both $f$ and $g$ are convex. 
Importantly, our approach does not require access to such a decomposition and applies directly to the objective $h$.

\medskip

A central challenge in nonsmooth nonconvex optimization is to obtain strong and meaningful optimality guarantees. In DC programming and related settings, several stationarity notions are available, with varying levels of strength (see, e.g., \citep{Pang2017}). 
Among these, \emph{directional (d-)stationarity} characterizes points with no feasible descent directions, defined through directional derivatives (Definition~\ref{def:d-stationary} and \Cref{ass:A2}).

Directional stationarity is stronger than the criticality notion standard in DC algorithms \citep{deOliveira2020} and than Clarke stationarity \citep{clarke1990optimization}; derivative-free schemes such as MADS~\citep{Audet2006} typically target the latter. See Remark~\ref{rem:stationarity-comparison} for limiting (Mordukhovich) stationarity and for a fuller discussion.
It is rarely guaranteed in general constrained nonsmooth nonconvex settings.
Existing methods achieving d-stationarity either rely on explicit DC decompositions and access to smooth component functions \citep{Pang2017, Liu2019a}, or are restricted to linear feasibility sets \citep{Beck2020, Beck2022}, and their guarantees are tightly coupled to the algorithmic dynamics.

In contrast, we propose a minimal exploration step that is independent of the base algorithm and enforces d-stationarity of accumulation points. 
Hence, any method can be made asymptotically d-stationary without modifying the oracle map itself; the augmentation modifies only the accepted iterate.

\paragraph{Main contributions.}

\begin{itemize}
\item \textbf{Exploration-augmented optimization.}
We introduce a one-step random exploration mechanism that can be integrated into any feasible optimization method.

\item \textbf{D-stationarity via random exploration.}
We prove that, almost surely, all accumulation points are d-stationary, independently of the underlying algorithm. 
The result 
does not use any structural properties of the underlying method.

\item \textbf{Rate preservation and oracle compatibility.}
We establish that the augmentation can preserve the convergence behavior of the base method and apply across different oracle classes such as DC and prox-linear-type schemes.
\end{itemize}

\subsection{Mathematical preliminaries}
We formally define two assumptions on \eqref{eq:main} and one supporting lemma that ensure well-defined directional derivatives and the continuity required for the probabilistic argument.
\begin{assumption}[A1]\label{ass:A1}
$h$ is locally Lipschitz and bounded below on $C$.
\end{assumption}
\begin{assumption}[A2]\label{ass:A2}
The directional derivative $h'(x;\, d) = \lim_{t \downarrow 0}(h(x + td) - h(x))/t$ exists for every $x \in C$ and $d \in \R^n$.
\end{assumption}
\begin{lemma}
    \label{ass:A3}
For every fixed $x \in C$, the map $d \mapsto h'(x;\, d)$ is (locally Lipschitz) continuous.
\end{lemma}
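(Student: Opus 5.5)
The plan is to show that, for fixed $x \in C$, the map $d \mapsto h'(x;\,d)$ is globally Lipschitz on $\R^n$ with constant $L_x$, where $L_x$ is a local Lipschitz constant of $h$ on a ball around $x$ (Assumption~\ref{ass:A1}). Continuity then follows immediately. By Assumption~\ref{ass:A2} the limit defining $h'(x;\,d)$ exists for every $d$, so we only need to compare difference quotients.

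\emph{Step 1 (local Lipschitz constant).} By Assumption~\ref{ass:A1} there exist $r>0$ and $L_x \ge 0$ such that $|h(y)-h(z)| \le L_x\norm{y-z}$ for all $y,z \in B(x,r)$. Local Lipschitz continuity is assumed for $h$ on $\R^n$, since $h\colon\R^n\to\R$ is locally Lipschitz, and not merely on $C$. Hence the points $x+td$ may leave $C$ without causing any trouble.

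\emph{Step 2 (difference quotient bound).} Fix $d_1,d_2 \in \R^n$. For $t>0$ small enough that $t\max\{\norm{d_1},\norm{d_2}\} < r$, both $x+td_1$ and $x+td_2$ lie in $B(x,r)$. Therefore
\[
\left|\frac{h(x+td_1)-h(x)}{t} - \frac{h(x+td_2)-h(x)}{t}\right| = \frac{|h(x+td_1)-h(x+td_2)|}{t} \le L_x\norm{d_1-d_2}.
\]

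\emph{Step 3 (pass to the limit).} Letting $t \downarrow 0$ and using the existence of both limits (Assumption~\ref{ass:A2}), we get $|h'(x;\,d_1)-h'(x;\,d_2)| \le L_x\norm{d_1-d_2}$. Thus the map is Lipschitz, hence continuous, and the constant depends only on $x$.

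The only point requiring care is making sure that the small-$t$ restriction suits both directions at once, and that both limits exist. Existence is supplied by Assumption~\ref{ass:A2}, which is exactly why this statement is phrased as a lemma and not as an extra assumption. Incidentally, the same argument shows $|h'(x;\,d)| \le L_x\norm{d}$ and positive homogeneity of $h'(x;\,\cdot)$, which may be useful later.
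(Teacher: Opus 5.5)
Your proof is correct, but it takes a different route from the paper. The paper never derives this lemma from Assumptions~\ref{ass:A1}--\ref{ass:A2}. It only justifies it for the structured examples $h=f-g$ ($f\in C^1$ or convex, $g$ convex and finite near $C$). There it uses the representation $h'(x;d)=f'(x;d)-g'(x;d)$ and convex analysis: $f'(x;\cdot)$ is linear or finite sublinear, $g'(x;\cdot)$ is finite sublinear, and finite convex functions are continuous. For the prox-linear and gradient-sampling instantiations it appeals separately to Hadamard directional differentiability.

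Your difference-quotient estimate needs no decomposition. It uses only two things: that $h$ is locally Lipschitz on all of $\R^n$, and that the limits exist for every $d\in\R^n$ at $x\in C$. You are right that Lipschitzness on $\R^n$, rather than relative to $C$, is what makes directions leaving $C$ harmless. It yields the explicit bound $|h'(x;d_1)-h'(x;d_2)|\le L_x\norm{d_1-d_2}$, with $L_x$ a local Lipschitz modulus of $h$ at $x$. That is global Lipschitz continuity in $d$, slightly more than the ``locally Lipschitz'' claimed.

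So your argument proves the lemma in exactly the generality in which it is stated and later used (Lemma~\ref{lem:continuity}, Theorem~\ref{thm:random}). It covers every oracle setting of Section~\ref{sec:oracles} at once. What the paper's structural route additionally provides is a check that the existence hypothesis (A2) actually holds for those concrete classes. Your proof, legitimately, takes that as given.
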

These properties hold for a broad class of objectives such as $h = f - g$ with $f \in C^1$ and $g$ convex and finite-valued on a neighborhood of $C$, as well as in the DC case where both $f$ and $g$ are convex. 
In both settings, the directional derivative admits the representation $h'(x;\, d) = f'(x;\, d) - g'(x;\, d)$, and continuity in $d$ follows from continuity/convexity respectively \citep[Theorems~23.1 and~10.1]{Rockafellar1997}.
\begin{definition}[Affine hull subspace]
    Given $x^0 \in C$, the linear subspace parallel to $\aff(C)$ is defined by $L := \aff(C - x^0)$. 
\end{definition}
Note that $L$ is independent of the choice of $x^0 \in C$: for any $x^0,x^1 \in C$ the sets $C-x^0$ and $C-x^1$ differ by translation by $x^0-x^1\in \aff(C-x^0)$, hence have the same (linear) affine hull.

 For $x \in C$ and $v \in \R^n$, denote the maximal feasible step length at $x$ in direction $v$ by
\[
  \tau(x;\, v) := \sup\!\bigl\{t \geq 0 : x + tv \in C\bigr\}.
\]

Our analysis uses the cone of feasible directions; a well-known fact is that it is convex.
\begin{definition}[Cone of feasible directions]\label{def:feasible-cone}
Let $C \subseteq \R^n$ be convex and $\bar{x} \in C$. The \emph{cone of feasible directions} is $D_C(\bar{x}) := \{w \mid \exists\, \lambda > 0,\;\bar{x} + \lambda w \in C\}$.
\end{definition}

\begin{definition}[Directional (d-)stationarity]\label{def:d-stationary}
A point $\bar{x} \in C$ is \textit{d-stationary} for \eqref{eq:main} if
\begin{equation*}
  h'(\bar{x};\, d) \geq 0 \qquad \forall\, d \in D_C(\bar{x}).
\end{equation*}
\end{definition}
By Lemma~\ref{ass:A3}, the same condition is equivalent to $h'(\bar{x};\, d) \geq 0$ for every $d \in \cl(D_C(\bar{x}))$; see Lemma~\ref{lem:continuity}.

\begin{remark}[Comparison with other stationarity notions]\label{rem:stationarity-comparison}
For locally Lipschitz $h$ and closed convex $C$, \emph{Clarke stationarity} at $\bar{x} \in C$ is 
$0 \in \partialC h(\bar{x}) + N_C(\bar{x})$ \citep{clarke1990optimization}. 
Writing $h^\circ$ for the Clarke directional derivative, one has $h'(\bar{x}; d) \leq h^\circ(\bar{x}; d)$ for all $d$, with equality if $h$ is Clarke regular, hence Clarke stationarity does not in general imply d-stationarity. 
For convex $C$, d-stationarity is equivalent to $h'(\bar{x}; d) \geq 0$ for all $d \in T_C(\bar{x})$.
\emph{Mordukhovich (limiting) stationarity} ($0 \in \partialL h(\bar{x}) + N_C(\bar{x})$ \citep{RockafellarWets2009}) implies Clarke stationarity but remains weaker than d-stationarity in general.
For DC objectives $h=f-g$ with $f,g$ convex, \emph{criticality} is again weaker than d-stationarity \citep{deOliveira2020}.
\end{remark}

Finally, throughout this paper we fix the parameters $\gamma > 0$ and $r \in (0, \infty]$.

\section{Related Work}\label{sec:related}

We summarize the closest strands of the literature in \Cref{tab:comparison}.
To the best of our knowledge, no prior work achieves d-stationarity via random feasible-direction exploration under general convex constraints with oracle-independent guarantees.

\paragraph{DC programming.}
The standard optimality notion in DC algorithms is criticality, which is strictly weaker than d-stationarity \citep{deOliveira2020}. \citet{Pang2017} initiated a line of work establishing d-stationarity for structured DC problems in which the concave part admits a finite-max representation, by solving one convex subproblem per active component. This framework has been extended along several directions by \citet{AhnPangXin2017}, 
\citet{Liu2019a},
\citet{TaoLi2023}, 
\citet{SunWu2024}, 
and \citet{FengYuan2026}, 
all of which rely critically on the finite-max structure of the DC decomposition, explicit access to the DC decomposition and its components, and first-order information, and are inherently algorithm-specific rather than oracle-agnostic. 

A complementary line of work \citep{Beck2020, Beck2022} establishes d-stationarity for smooth-minus-convex objectives over polyhedral sets using feasible-direction methods, including randomized variants based on fixed finite spanning sets. In contrast, bundle methods~\citep{Joki2018, deOliveira2021} typically guarantee only criticality or Clarke stationarity.

Finally, while the convergence rates of DCA are well understood in the unconstrained setting \citep{Abbaszadehpeivasti2024}, these results concern criticality measures and do not address d-stationarity. 
In Section~\ref{sec:dc}, we show that the proposed augmentation preserves the $O(1/\sqrt{N})$ rate of DCA in a strongly convex regime, while the d-stationarity upgrade follows from the exploration mechanism and is independent of the DC structure.

\paragraph{Direct search: GSS, pattern search, and MADS.}
Generating set search~\citep{Kolda2006} and pattern search~\citep{Lewis2000} poll finitely many directions and yield KKT-type limits in the smooth case and Clarke stationary limits for Lipschitz objectives \citep[Corollary to Proposition~2.4.3]{clarke1990optimization}. MADS~\citep{Audet2006} randomizes to asymptotically dense polling (Clarke stationarity); although the direction coverage is conceptually related to our argument, MADS is a standalone mesh-based method with sufficient decrease, whereas we augment an arbitrary oracle and target d-stationarity under general convex constraints. The feasible-direction methods of \citet{Beck2020, Beck2022} reach d-stationarity only under smooth-minus-convex structure over polyhedral sets.


\paragraph{Randomized search outside direct search.}
\citet{Gratton2015} analyze probabilistic-descent direct search, yielding almost-sure convergence and $O(1/\sqrt{k})$ gradient-norm rates for smooth objectives; \citet{Gratton2019} treat linear constraints analogously. These schemes remain direct-search templates targeting gradient- or Clarke-type stationarity. Classical random search \citep{SolisWets1981} and sufficient-descent schemes \citep{Wardi1989} give value- or stationarity-type guarantees under coverage or internal descent, but are standalone algorithms. Our augmentation instead layers one random feasible-direction probe on an arbitrary oracle and targets d-stationarity of accumulation points under convex constraints.



\begin{table}[t]
\centering
\caption{Comparison of related fields. 
}
\label{tab:comparison}
\smallskip
\begin{tabularx}{\linewidth}{@{}>{\raggedright\arraybackslash}X>{\raggedright\arraybackslash}X>{\raggedright\arraybackslash}X>{\raggedright\arraybackslash}X@{}}
\toprule
Field / Work & Target & Mechanism & Nonsmooth DC? \\
\midrule
DC methods \citep{Pang2017} & D-stationarity & Convex subproblem & $g = \max_i \psi_i$, $\psi_i \in C^1$ only \\
GSS / GPS \citep{Kolda2006} & Clarke stat. & Finite spanning set & No \\
MADS \citep{Audet2006} & Clarke stat. & Dense random poll & No \\
Prob.\ direct search \citep{Gratton2015} & Grad.\ stat. & Random sphere dir. & No \\
Random search \citep{SolisWets1981, Wardi1989} & Value / stationarity & Coverage / sufficient descent & No \\
\midrule
This work & D-stationarity & Random sphere dir. & Yes \\
\bottomrule
\end{tabularx}
\end{table}

\section{The Random Exploration Procedure}\label{sec:meta}
The Random Exploration Procedure (REP) augments a base optimization method with a simple randomized exploration step. 
At the current iterate, REP samples a direction in the affine hull and a candidate step size, accepting the move only if it yields a feasible sufficient decrease in a regularized objective; it does not rely on gradients or problem structure and uses only function value comparisons.
This regularization guarantees sufficient-decrease-type behavior.


\begin{algorithm}
\caption{Random Exploration Procedure (REP)}
\label{alg:ERP}
\begin{algorithmic}[1]
\Require $x \in C$; $\gamma > 0$, $r \in (0,\infty)$.
  \State Sample $v_{ex} \sim \mathcal{D}_v$ with  $\mathrm{supp}(\mathcal{D}_v) = S^{n-1} \cap L$
  \State Sample $\hat{t} \sim \mathcal{D}_t$ with  $\mathrm{supp}(\mathcal{D}_t) = [0,r]$
  \State $t_{ex} \leftarrow 
    \begin{cases}
    \hat{t} & \text{if } x + \hat{t} v_{ex} \in C \text{ and } h(x + \hat{t} v_{ex}) + \frac{\gamma}{2}(\hat{t})^2 < h(x), \\
    0 & \text{otherwise.}
    \end{cases}$
    \State \textbf{Return}  $y \leftarrow x + t_{ex} v_{ex}$
\end{algorithmic}
\end{algorithm}

\begin{remark}[Sampling distributions]
The direction distribution $\mathcal{D}_v$ is supported on $S^{n-1} \cap L$ and assigns strictly positive probability to every measurable subset of $S^{n-1} \cap L$ with strictly positive surface measure. The step-size distribution $\mathcal{D}_t$ is supported on $[0,r]$ and assigns positive probability to every interval $[a,b] \subseteq (0,r]$.
We assume $(v_{\mathrm{ex}}^k,\hat t^k)$ are i.i.d.\ and independent of the past.
\end{remark}

\Cref{alg:random} augments a feasible oracle with a randomized exploration step: the oracle proposes a candidate, while exploration samples a direction and step size to generate a competing feasible point accepted by value comparison. Almost surely, every neighborhood of relative-interior feasible direction at any accumulation point is sampled infinitely often, yielding d-stationarity via Borel--Cantelli.


\begin{algorithm}
\caption{Random exploration procedure implementation}
\label{alg:random}
\begin{algorithmic}[1]
\Require $x^0 \in C$; oracle $\mathcal{O}_C$ over $C$; $\gamma > 0$, $r \in (0,\infty)$.
\For{$k = 0, 1, 2, \ldots$}
  \State Set $y^{k+1} \leftarrow \mathrm{REP}(x^k)$ and  $z^{k+1} \leftarrow \mathcal{O}_C(x^k)$
  \State $x^{k+1} \leftarrow \arg\min_{u\,\in\,\{y^{k+1},\,z^{k+1}\}} h(u)$
\EndFor
\end{algorithmic}
\end{algorithm}

\begin{remark}[Implementation cost]
\Cref{alg:random} requires one feasibility check 
and at most one function evaluation, 
which can be skipped when the candidate is infeasible, per exploration step. 
\end{remark}


\section{Convergence Analysis}\label{sec:convergence}

This section establishes that the exploration step enforces asymptotic d-stationarity, independently of the oracle, via a probabilistic covering of sampled directions and a local geometric property of the feasible set; some proofs are deferred to \Cref{sec:appendix Proofs of}.


\begin{theorem}[Asymptotic d-stationarity]\label{thm:random}
Let  \Cref{ass:A1} and \Cref{ass:A2} hold, and let $\{x^k\}$ be generated by Algorithm~\ref{alg:random} with any oracle $\mathcal{O}_C$ producing feasible iterates. Then:
\begin{enumerate}
\item[(i)] $\sum_{k=0}^{\infty}(t^k_{ex})^2 < \infty$, and in particular $t^k_{ex} \to 0$.
\item[(ii)] Almost surely, every accumulation point of $\{x^k\}$ is d-stationary.
\end{enumerate}
\end{theorem}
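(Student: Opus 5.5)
For part (i), I will use a telescoping argument. By construction of Algorithm~\ref{alg:random}, $h(x^{k+1}) = \min\{h(y^{k+1}), h(z^{k+1})\} \le h(y^{k+1})$. By the acceptance rule in Algorithm~\ref{alg:ERP}, $h(y^{k+1}) \le h(x^k) - \frac{\gamma}{2}(t^k_{ex})^2$; when $t^k_{ex}=0$ this holds trivially. Hence
\[
h(x^{k+1}) \le h(x^k) - \tfrac{\gamma}{2}(t^k_{ex})^2 .
\]
Summing from $0$ to $N$ and using that $h$ is bounded below on $C$ (\Cref{ass:A1}) gives
\[
\tfrac{\gamma}{2}\sum_k (t^k_{ex})^2 \le h(x^0) - \inf_C h < \infty .
\]
This holds deterministically, and $t^k_{ex}\to 0$ follows. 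Along the way I note that $h(x^k)$ is nonincreasing and convergent, to some limit $h^*$. By continuity, $h(\bar x)=h^*$ at every accumulation point $\bar x$.

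For part (ii), I argue by contradiction. Suppose $\bar x$ is an accumulation point with $h'(\bar x; d)<0$ for some $d\in D_C(\bar x)$. The first step is to move $d$ into the relative interior of the feasible cone. Since $D_C(\bar x)$ is a convex cone contained in $L$ whose relative interior is nonempty, I perturb $d$ toward a point of $\ri C - \bar x$. Continuity of $d\mapsto h'(\bar x;d)$ (Lemma~\ref{ass:A3}) then gives a unit vector $\bar v\in L$, a number $\delta>0$ and a margin $\eta>0$ with the following properties. Every unit $v\in L$ with $\|v-\bar v\|<\delta$ satisfies $h'(\bar x;v)<-2\eta$. Moreover, $\bar x + t v \in C$ for all $t\in[0,2t_0]$, for some $t_0>0$; this uses that $\bar x+2t_0\bar v$ is in $\ri C$, so a relative ball around it lies in $C$, together with convexity.

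Next, I transfer this descent from $\bar x$ to nearby iterates $x^k$. Fix a small interval $[a,b]\subset(0,\min(r,t_0)]$ for which $h(\bar x+tv)\le h(\bar x)-\eta t$ for $t\in[a,b]$ and all $v$ in the cap. I would obtain the uniformity over the cap from local Lipschitz continuity and compactness: take a finite net of directions, use the directional derivative along each, and absorb the error with the Lipschitz constant. Choose $b$ so that $\gamma b^2/2 < \eta a/4$. Then, by Lipschitz continuity with constant $\ell$ near $\bar x$, for $\|x^k-\bar x\|\le \epsilon$ with $\epsilon$ small, I get $h(x^k+tv)+\frac{\gamma}{2}t^2 < h(\bar x) - \eta a/2 + 2\ell\epsilon$. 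Since $h(x^k)\ge h^*=h(\bar x)$, this is $<h(x^k)$ once $\epsilon$ is small. Feasibility $x^k+tv\in C$ also needs to hold uniformly, and I expect this to be the main obstacle, because $x^k$ may lie on the relative boundary of $C$ while $\bar x+t v$ lies in $\ri C$. I would handle it by shrinking: the points $\bar x+tv$ with $t\in[a,b]$ form a compact subset of $\ri C$ at positive relative distance $\rho$ from the relative boundary. Both $x^k$ and $x^k-\bar x+tv-tv$ stay in $\aff C$, so $\|x^k-\bar x\|<\rho$ guarantees $x^k+tv\in C$. This is where restricting directions to $L$ matters.

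Finally, I apply Borel--Cantelli. On the event that $\bar x$ is an accumulation point, there are infinitely many $k$ with $\|x^k-\bar x\|<\epsilon$. At each such $k$ the independent sample $(v^k_{ex},\hat t^k)$ lies in the cap times $[a,b]$ with a fixed probability $p>0$, by the Remark on sampling distributions. The conditional (Lévy) second Borel--Cantelli lemma then yields that this happens infinitely often along those $k$ almost surely. Each such occurrence forces $t^k_{ex}\ge a$, which contradicts $t^k_{ex}\to 0$ from part (i).

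To make ``every accumulation point'' hold simultaneously almost surely, I take a countable family. I use rational centers $q$ and radii, rational cap data, and rational $[a,b]$, and apply the argument to the events ``$x^k$ enters $B(q,\epsilon)$ infinitely often''. This requires the constants above to be chosen so that they are valid uniformly on a small ball around $\bar x$ rather than at $\bar x$ alone, with the final $h^*$ comparison made via $h(x^k)\ge h^*$ and continuity. The result is a single full-measure event on which no accumulation point violates d-stationarity.
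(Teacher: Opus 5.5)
Your proposal is correct and follows essentially the paper's proof. Part (i) is the same telescoping bound, and part (ii) combines the same three ingredients: a uniform feasibility radius around a relative-interior direction (the paper's Lemma~\ref{lem:step-bound}); a Lipschitz transfer of the directional descent to nearby iterates, which forces $t^k_{ex}$ to stay bounded away from zero infinitely often and so contradicts (i); and the conditional Borel--Cantelli--L\'evy lemma applied over a countable grid of balls, caps and step intervals to get one full-measure event valid for all accumulation points (Lemma~\ref{lem:BC-directions}).

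The differences are cosmetic. You use a fixed cap of uniformly violating directions and the bound $h(x^k)\ge h(\bar x)$ from monotonicity. The paper instead takes a subsequence with $v^j_{ex}\to v_i$ for $v_i$ in a countable dense set, bounds $h(x^*)-h(x^j)$ by Lipschitz continuity, and extends to $\cl(D_C(x^*))$ at the end. One small fix: draw the cap centers from a countable dense subset of $S^{n-1}\cap L$, since ``rational'' unit vectors need not lie in $L$.
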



\begin{remark}[Existence of accumulation points]
A sufficient condition for the existence of an accumulation point is boundedness of the level set, $\{x \in C : h(x) \le h(x^0)\}.$
Indeed, greedy selection ensures $h(x^{k+1}) \le h(x^k)$ for all $k$, hence $h(x^k) \le h(x^0)$ and $\{x^k\}$ remains in this set. If it is bounded, then $\{x^k\}$ is bounded in $\R^n$ and admits an accumulation point by Bolzano--Weierstrass.
\end{remark}


The proof of \Cref{thm:random} rests on probabilistic and geometric ingredients.
Let $\mathcal{Q}\subseteq S^{n-1}\cap L$ be any fixed countable dense set.
Lemma \ref{lem:BC-directions} records a joint density property for the sampled exploration direction, its proximity to a target direction, and the random step size, simultaneously for all directions from $\mathcal{Q}$. Lemma~\ref{lem:step-bound} exposes a local uniform feasibility radius: for every relative-interior direction $v$, nearby directions remain in the relative interior and share a common feasible step size.

\begin{lemma}[Joint density of exploration direction, proximity, and step size]\label{lem:BC-directions}
There exists an event $\Omega_{BC}$ with $\mathbb{P}(\Omega_{BC})=1$ such that, on $\Omega_{BC}$, for every $0 < a < b \leq r$, every $\rho > 0$, every accumulation point $x^*$ of $\{x^k\}$, and every $v \in \mathcal{Q}$, there exists a subsequence $J \subseteq \N$ such that, as $j \to \infty$ along $J$, it holds that: $x^j \to x^*,\ v^j_{ex} \to v,\ \norm{v^j_{ex} - v} < \rho,\ \hat{t}^j \in [a,\, b].$
\end{lemma}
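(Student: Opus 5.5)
The plan is to reduce the uncountably many quantifiers (over $a,b,\rho$, over the random accumulation point $x^*$, and over $v\in\mathcal{Q}$) to countably many, and to handle each countable case with a conditional (L\'evy) Borel--Cantelli argument. That argument is what copes with the adaptivity of the iterates. Let $\mathcal{F}_k$ be the $\sigma$-algebra generated by $x^0$, the exploration samples $(v_{ex}^i,\hat t^i)_{i<k}$, and any randomness used by the oracle up to iteration $k$. I will assume, as in the sampling remark, that $(v_{ex}^k,\hat t^k)$ is independent of $\mathcal{F}_k$, and that any oracle randomness is independent of the exploration samples. Then $x^k$ is $\mathcal{F}_k$-measurable and $(v^k_{ex},\hat t^k,x^k)$ is $\mathcal{F}_{k+1}$-measurable.

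\emph{Countable reduction.} Fix the following countable data: rationals $0<a'<b'\le r$, an integer $m\ge 1$, a direction $v\in\mathcal{Q}$, and an open ball $B$ from a countable base of $\R^n$ (rational centers and radii). Define
\[
A_k := \{\norm{v^k_{ex}-v}<1/m,\ \hat t^k\in[a',b']\}.
\]
The set $\{w\in S^{n-1}\cap L:\norm{w-v}<1/m\}$ is relatively open and nonempty in $S^{n-1}\cap L$, so it has positive surface measure. Also $[a',b']\subseteq(0,r]$. By the sampling remark and independence, $\mathbb{P}(A_k\mid\mathcal{F}_k)=p>0$, a constant. (If $r=\infty$, I take $b'$ rational and finite.)

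\emph{Conditional Borel--Cantelli.} Let $E_k:=\{x^k\in B\}\cap A_k\in\mathcal{F}_{k+1}$. Then $\mathbb{P}(E_k\mid\mathcal{F}_k)=p\,\mathbf 1\{x^k\in B\}$. L\'evy's extension of Borel--Cantelli states that, almost surely, $\{E_k \text{ i.o.}\}=\{\sum_k\mathbb{P}(E_k\mid\mathcal{F}_k)=\infty\}$. Hence, almost surely, $\{x^k\in B\text{ i.o.}\}\subseteq\{E_k\text{ i.o.}\}$. Let $\Omega_{BC}$ be the intersection of these full-measure events over all countably many tuples $(a',b',m,v,B)$; then $\mathbb{P}(\Omega_{BC})=1$. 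I expect this step to be the main obstacle. Because $x^k$ is adaptive and $x^*$ is random, a plain i.i.d.\ Borel--Cantelli does not apply. The conditional version is exactly what removes the dependence, provided the exploration samples are independent of $\mathcal{F}_k$.

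\emph{Extraction of $J$.} Work on $\Omega_{BC}$, and fix $0<a<b\le r$, $\rho>0$, an accumulation point $x^*$, and $v\in\mathcal{Q}$. Choose rationals $a\le a'<b'\le b$. For each $m\ge 1$, pick a base ball $B_m\ni x^*$ contained in the ball of radius $1/m$ around $x^*$, and choose an integer $m'\ge m$ with $1/m'\le\rho$. Since $x^*$ is an accumulation point, $x^k\in B_m$ infinitely often. By the definition of $\Omega_{BC}$ applied to $(a',b',m',v,B_m)$, there are infinitely many $k$ with $x^k\in B_m$, $\norm{v^k_{ex}-v}<1/m'$, and $\hat t^k\in[a',b']\subseteq[a,b]$. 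Pick such indices $k_m$ inductively with $k_m>k_{m-1}$, and set $J=\{k_m\}$. Along $J$ we get $\norm{x^{k_m}-x^*}<1/m$, $\norm{v^{k_m}_{ex}-v}<\min(\rho,1/m)$, and $\hat t^{k_m}\in[a,b]$. This gives all the asserted properties.
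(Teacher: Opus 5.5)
Your proof is correct and follows essentially the same route as the paper: for each tuple in a countable grid, a conditional Borel--Cantelli--L\'evy argument with conditional probability $p\,\mathbf{1}\{x^k \in B\}$; then intersection over the grid to obtain $\Omega_{BC}$; then an inductive diagonal extraction of $J$. The differences are only cosmetic. You index neighborhoods by a countable base of rational balls, where the paper uses dense points $y_n \in C$ with rational radii $\delta_m$. You use open caps of radius $1/m$ in place of closed caps with rational radii. You also make explicit that oracle randomness is included in the filtration and is independent of the exploration samples.
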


\begin{proof}
We assume throughout that $\dim L \ge 1$; otherwise $C$ is a singleton and the result is trivial.

Let $\mathcal{F}_{k-1}$ denote the $\sigma$-algebra generated by $\{x^0, v^0_{ex}, \hat{t}^0, \ldots, x^{k-1}, v^{k-1}_{ex}, \hat{t}^{k-1}, x^k\}$, so that $x^k$ is $\mathcal{F}_{k-1}$-measurable while both $v^k_{ex}$ and $\hat{t}^k$ are independent of $\mathcal{F}_{k-1}$ (and of each other).

Fix a point $y \in C$, a radius $\delta > 0$, a spherical cap
$V := B[w, \varepsilon'] \cap S^{n-1} \cap L,$
for some $w \in S^{n-1} \cap L$ and $\varepsilon' > 0$, and an interval $[a', b'] \subseteq (0, r]$ with $a' < b'$. 
Set
\[
p := \mathbb{P}(v^k_{ex} \in V) > 0,\qquad
q := \mathbb{P}(\hat t^k \in [a', b']) > 0,
\]
where positivity follows from the full-support assumptions.

Define
\[
A_k := \{v^k_{ex} \in V\} \cap \{\hat{t}^k \in [a', b']\} \cap \{\norm{x^k - y} < \delta\}.
\]
Independence yields $\mathbb{P}(A_k \mid \mathcal{F}_{k-1})
= \mathbf{1}_{\{\norm{x^k - y} < \delta\}} \cdot p \cdot q.$

By the conditional Borel--Cantelli--L\'evy lemma \citep[Ch.~VII, \S5, Cor.~2]{Shiryaev2019},
\begin{equation}\label{eq:BCL-step}
  \Bigl\{\sum_{k} \mathbb{P}(A_k \mid \mathcal{F}_{k-1}) = \infty\Bigr\}
  =
  \{A_k\ \text{occurs infinitely often}\}
  \quad\text{a.s.}
\end{equation}
In particular, whenever $x^k$ visits $B(y,\delta)$ infinitely often, $A_k$ occurs infinitely often almost surely.

Let $\{y_n\}$ be a countable dense subset of $C$, let $\mathcal{Q}=\{q_i\}_{i\in\N}$, and let $(\delta_m, \varepsilon'_l, a'_s, b'_s)$ range over positive rationals with $a'_s < b'_s \leq r$.
For each tuple $u=(n,i,m,l,s)$, define the event
\[
E_u := \Bigl\{\|x^k-y_n\|<\delta_m\ \text{i.o.}\ \Longrightarrow\
\bigl(v^k_{ex}\in B[q_i,\varepsilon'_l]\cap S^{n-1}\cap L,\ \hat t^k\in[a'_s,b'_s]\bigr)\ \text{i.o.}\Bigr\}.
\]
By \eqref{eq:BCL-step}, $\mathbb{P}(E_u)=1$ for every $u$: on paths where $\|x^k-y_n\|<\delta_m$ occurs infinitely often, the enriched event occurs infinitely often almost surely; otherwise the implication is vacuous.
Set $\Omega_{BC}:=\bigcap_{u} E_u.$
Since the index set $\{u\}$ is countable and each $E_u$ has probability $1$, we have $\mathbb{P}(\Omega_{BC})=1$. 
On $\Omega_{BC}$, the following grid property holds simultaneously:

For every tuple $(y_n, \delta_m, q_i, \varepsilon'_l, a'_s, b'_s)$, if $\norm{x^k - y_n} < \delta_m$ for infinitely many $k$, then there are infinitely many such $k$ with additionally
\[
v^k_{ex} \in B[q_i, \varepsilon'_l] \cap S^{n-1} \cap L,
\qquad
\hat{t}^k \in [a'_s, b'_s].
\]

Fix $\omega \in \Omega_{BC}$. Let $x^*$ be any accumulation point of $\{x^k(\omega)\}$ and fix $v \in \mathcal{Q}$. For this fixed $\omega$, write $x^k:=x^k(\omega)$ and similarly for sampled variables. Fix arbitrary $\eta \in (0,\rho)$. By density, choose rationals $y_n, \delta_m, \varepsilon'_l, a'_s, b'_s$ such that
$
\|y_n - x^*\| < \eta/4,
\delta_m \in (\eta/2, 3\eta/4),
\varepsilon'_l < \eta/4,
$
and
$
a < a'_s < b'_s < b.
$
Choose $i$ such that $q_i=v$.
Since $x^*$ is an accumulation point, $\|x^k - x^*\| < \eta/4$ for infinitely many $k$. For such indices,
\[
\|x^k - y_n\|
\le \|x^k - x^*\| + \|x^* - y_n\|
< \eta/2 < \delta_m.
\]
Hence, by the grid property, there are infinitely many indices $k$ such that: $\|x^k - y_n\| < \delta_m, v^k_{ex} \in B[q_i, \varepsilon'_l],$ for $\hat{t}^k \in [a'_s, b'_s].$
For these indices,
\[
\|x^k - x^*\|
\le \|x^k - y_n\| + \|y_n - x^*\|
< \delta_m + \eta/4
< \eta,
\]
and $\|v^k_{ex} - v\|
< \varepsilon'_l
< \eta < \rho,$
while $\hat{t}^k \in [a'_s, b'_s] \subseteq (a,b)$.

Let $\eta_q \downarrow 0$ with $\eta_1 < \rho$, and for each $q$ let $K_q$ be the infinite set of indices satisfying the above properties with $\eta=\eta_q$. Choose inductively $j_q \in K_q$ with $j_q > j_{q-1}$. Then, since $\|x^{j_q} - x^*\| < \eta_q$ and $\eta_q \to 0$, we have $x^{j_q} \to x^*$. Similarly, $\|v^{j_q}_{ex} - v\| < \eta_q$ implies $v^{j_q}_{ex} \to v$, while $\|v^{j_q}_{ex} - v\| < \rho$ and $\hat{t}^{j_q} \in [a,b]$ for all $q$. This completes the proof.
\end{proof}

\begin{lemma}[Uniform feasibility radius]\label{lem:step-bound}
Let $x^* \in C$ and $v \in \ri(D_C(x^*)) \cap L$. There exist $\rho > 0$, $\bar{\varepsilon} > 0$, and $\delta > 0$ such that: (i)  $B(v,\, \rho) \cap L \subseteq \ri(D_C(x^*))$; (ii) $\tau(x;\, w) \geq \bar{\varepsilon}$ for every $x \in C$ with $\norm{x - x^*} < \delta$ and every $w \in B(v,\, \rho) \cap L$.
\end{lemma}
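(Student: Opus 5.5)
The plan is to reduce both claims to one fact: a relative-interior direction of the feasible cone points into the relative interior of $C$ itself. I would then use the relative openness of $\ri C$ inside $\aff(C)$ to absorb small perturbations of both the base point and the direction.

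\textbf{Step 1: $D_C(x^*)$ is a convex cone with $\aff(D_C(x^*)) = L$.} If $x^* + \lambda w \in C$ with $\lambda > 0$, then $w \in \lambda^{-1}(C - x^*) \subseteq L$, so $D_C(x^*) \subseteq L$. Conversely, $C - x^* \subseteq D_C(x^*)$ and $0 \in D_C(x^*)$, so the affine hull of $D_C(x^*)$ contains $\aff(C - x^*) = L$. Hence $\ri(D_C(x^*))$ is an open subset of $L$. Since $v$ lies in it, there is $\rho_1 > 0$ with $B(v,\rho_1) \cap L \subseteq \ri(D_C(x^*))$. This gives (i) for every $\rho \le \rho_1$.

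\textbf{Step 2: some point $x^* + \lambda v$ lies in $\ri C$.} Note that $D_C(x^*) = \bigcup_{\lambda>0} \lambda (C - x^*)$ is the cone generated by the convex set $C - x^*$, which contains $0$. By the standard description of the relative interior of a generated cone \citep[Cor.~6.8 / Thm.~6.9]{Rockafellar1997}, $\ri(D_C(x^*)) = \{\mu z : \mu > 0,\ z \in \ri(C - x^*)\}$. So $v = \mu z$ with $x^* + z \in \ri C$, and setting $\lambda := 1/\mu$ gives $x^* + \lambda v \in \ri C$. Consequently there is $s > 0$ with $B(x^* + \lambda v, s) \cap \aff(C) \subseteq C$.

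I expect this step to be the main obstacle, namely justifying the characterization of $\ri$ of the generated cone. If citing it feels too loose, I would prove it directly. Take any $z_0 \in \ri C - x^*$. For small $\epsilon > 0$ the point $v - \epsilon(z_0 - v)$ still lies in $D_C(x^*)$ (because $v \in \ri D_C(x^*)$), so $v$ is a strict convex combination of $z_0$ and an element $\mu' z'$ with $z' \in C - x^*$. After rescaling, the line-segment principle \citep[Thm.~6.1]{Rockafellar1997} places a positive multiple of $v$ in $\ri(C) - x^*$.

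\textbf{Step 3: the uniform feasibility radius.} Set $\bar\varepsilon := \lambda$, $\delta := s/2$ and $\rho := \min\{\rho_1, s/(2\lambda)\}$. Take $x \in C$ with $\|x - x^*\| < \delta$ and $w \in B(v,\rho) \cap L$. Then $x + \lambda w \in \aff(C)$, since $x \in C$ and $w \in L$. Moreover,
\[
\|(x + \lambda w) - (x^* + \lambda v)\| \le \|x - x^*\| + \lambda \|w - v\| < s/2 + s/2 = s,
\]
so $x + \lambda w \in C$. By convexity of $C$, $x + t w \in C$ for all $t \in [0,\lambda]$, hence $\tau(x; w) \ge \lambda = \bar\varepsilon$. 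This proves (ii), and (i) holds because $\rho \le \rho_1$.
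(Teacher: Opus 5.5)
Your proof is correct, and its skeleton matches the paper's. Both reduce the lemma to exhibiting a point $x^*+\bar\varepsilon v$ together with a relative ball $B(x^*+\bar\varepsilon v,s)\cap\aff(C)\subseteq C$. Your Step~3 (triangle inequality, then convexity along the segment from $x$ to $x+\bar\varepsilon w$) is essentially the paper's final step. The difference lies in how that ball is produced.

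The paper does it by hand. It places a $k$-simplex $\conv\{d_0,\ldots,d_k\}\subseteq B(v,r_0)\cap L$ around $v$ and takes $\bar\varepsilon=\min_j\varepsilon_j$ over the finitely many vertices. Convexity of $C$ then pushes the whole simplex into $C$, which gives the ball explicitly with radius $\bar\varepsilon\rho_0$.

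You instead isolate the underlying fact, namely that a relative-interior feasible direction points into $\ri C$. You obtain it either from the relative-interior description of the cone generated by $C-x^*$ or from your line-segment-principle argument. That argument is sound: after rescaling, a positive multiple of $v$ is a convex combination of a point of $C-x^*$ and $z_0\in\ri(C)-x^*$, with positive weight on $z_0$. It uses $\ri C\neq\emptyset$, which holds because $C$ is nonempty and convex.

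What each route buys: yours is conceptually cleaner, and your Step~1 explicitly verifies $\aff(D_C(x^*))=L$, which the paper uses without comment when it treats $\ri(D_C(x^*))$ as open in $L$. The paper's route is fully elementary, needing only the definition of $D_C(x^*)$ and convexity, with no appeal to relative-interior calculus.
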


\begin{lemma}[Continuity and extension]\label{lem:continuity}
The directional derivative map $d \mapsto h'(x^*;\, d)$ is continuous and positively homogeneous of degree one. Moreover, if $h'(x^*;\, v) \geq 0$ for all $v \in \ri(D_C(x^*))$, then $h'(x^*;\, d) \geq 0$ for all $d \in \cl(D_C(x^*))$.
\end{lemma}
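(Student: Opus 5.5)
We need to prove Lemma~\ref{lem:continuity}. It has three parts: continuity of $d\mapsto h'(x^*;d)$, positive homogeneity, and the extension from $\ri(D_C(x^*))$ to $\cl(D_C(x^*))$.

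\textbf{Continuity.} By \Cref{ass:A1}, $h$ is Lipschitz with some constant $\ell$ on a ball $B(x^*,\epsilon)$. Fix $d_1,d_2$. For small $t>0$, both $x^*+td_1$ and $x^*+td_2$ lie in this ball, so
\[
\bigl|h(x^*+td_1)-h(x^*+td_2)\bigr|/t \le \ell\norm{d_1-d_2}.
\]
Subtract $h(x^*)/t$ from both difference quotients and let $t\downarrow0$. By \Cref{ass:A2} both limits exist, which gives $|h'(x^*;d_1)-h'(x^*;d_2)|\le \ell\norm{d_1-d_2}$. This is exactly the statement of Lemma~\ref{ass:A3}.

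\textbf{Positive homogeneity.} For $\lambda>0$, substitute $s=\lambda t$ in the difference quotient:
\[
\frac{h(x^*+t\lambda d)-h(x^*)}{t}=\lambda\,\frac{h(x^*+sd)-h(x^*)}{s}.
\]
Letting $t\downarrow0$ (equivalently $s\downarrow0$) gives $h'(x^*;\lambda d)=\lambda h'(x^*;d)$. For $\lambda=0$ both sides vanish trivially.

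\textbf{Extension.} This is the only step with geometric content. $D_C(x^*)$ is a nonempty convex cone (it contains $0$ and is convex, as noted before Definition~\ref{def:feasible-cone}). A nonempty convex set has nonempty relative interior, and by \citep[Theorem~6.3]{Rockafellar1997} we have $\cl(\ri K)=\cl K$ for a convex set $K$. Hence every $d\in\cl(D_C(x^*))$ is a limit of a sequence $v_m\in\ri(D_C(x^*))$. By hypothesis $h'(x^*;v_m)\ge0$, and continuity passes this to the limit, giving $h'(x^*;d)\ge0$.

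One point needs checking in the degenerate case where $D_C(x^*)=\{0\}$, i.e.\ $C$ is a singleton. Then $\ri=\{0\}$ and the conclusion is trivial, since $h'(x^*;0)=0$.

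If the hypothesis is read as quantified only over $\ri(D_C(x^*))\cap L$, nothing changes: $D_C(x^*)\subseteq L$, because feasible directions from $x^*\in C$ lie in $\aff(C-x^*)=L$. The main obstacle is therefore just invoking $\cl(\ri K)=\cl K$ correctly; the rest is routine.
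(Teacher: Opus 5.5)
Your proof is correct and follows the same route as the paper: continuity of $h'(x^*;\cdot)$, positive homogeneity via the substitution $s=\lambda t$, and the extension via $\cl(\ri K)=\cl K$ for the convex cone $K=D_C(x^*)$ followed by a limiting argument. The only difference is that you derive the continuity directly from \Cref{ass:A1} and \Cref{ass:A2}, obtaining a global Lipschitz bound in $d$ with the local Lipschitz constant of $h$ at $x^*$; the paper instead invokes Lemma~\ref{ass:A3}, which it states without proof, so your version is slightly more self-contained.
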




The probabilistic and geometric infrastructure is now ready for the proof of \Cref{thm:random}.


\begin{proof}[Proof of \Cref{thm:random}]
\textit{(i) Step-size decay.} Since $t = 0$ is always available in the two-point comparison and $t^k_{ex}$ minimizes over $\{0, \hat{t}^k\}$:
  $h\!\left(x^k + t^k_{ex}\,v^k_{ex}\right) + \tfrac{\gamma}{2}(t^k_{ex})^2 \leq h(x^k).$
  
By greedy selection, $h(x^{k+1}) \leq h(y^{k+1})$, so $h(x^k) - h(x^{k+1}) \geq \frac{\gamma}{2}(t^k_{ex})^2$. Summing gives $\sum_k (t^k_{ex})^2 \leq \frac{2}{\gamma}(h(x^0) - \inf_C h) < \infty$, hence $t^k_{ex} \to 0$.

\textit{(ii) D-stationarity.} Let $\Omega_{BC}$ be the event from Lemma~\ref{lem:BC-directions}, which guarantees simultaneous subsequences for all directions in $\mathcal{Q}$. Fix $\omega \in \Omega_{BC}$ and work pathwise. Let $x^*$ be an accumulation point of $\{x^k(\omega)\}$, and enumerate $\mathcal{Q}\cap \ri(D_C(x^*))$ as $\{v_i\}_{i=0}^{\infty}$. Since $\ri(D_C(x^*))\cap S^{n-1}$ is relatively open in $S^{n-1}\cap L$ and $\mathcal{Q}$ is dense in $S^{n-1}\cap L$, the set $\{v_i\}$ is dense in $\ri(D_C(x^*))\cap S^{n-1}$.

Fix $i \in \N$ and suppose for contradiction $h'(x^*;\, v_i) = -\varepsilon < 0$ for some $\varepsilon > 0$.

By Lemma~\ref{lem:step-bound} applied with $v = v_i$, there exist $\rho_i > 0$, $\bar{\varepsilon}_i > 0$, and $\delta_i > 0$ such that $B(v_i,\, \rho_i) \cap L \subseteq \ri(D_C(x^*))$, and $\tau(x;\, w) \geq \bar{\varepsilon}_i$ for every $x \in C$ with $\norm{x - x^*} < \delta_i$ and every $w \in B(v_i,\, \rho_i) \cap L$.

By local Lipschitz continuity of $h$ at $x^*$, there exist $R_h>0$ and $L_h>0$ such that $h$ is $L_h$-Lipschitz on $B(x^*,R_h)$.

Next choose the comparison scale. By definition of the directional derivative with $h'(x^*;\, v_i) = -\varepsilon$, there exists $\tau_0 > 0$ such that $h(x^* + t v_i) - h(x^*) \leq -\tfrac{\varepsilon}{2}\,t$ for every $t \in (0,\, \tau_0]$. Set
\[
  t^* := \min\!\left\{\bar{\varepsilon}_i,\;\tau_0,\;\frac{\varepsilon}{4\gamma},\;r,\;\frac{R_h}{4}\right\} > 0.
\]

Now lock direction and step-size sampling simultaneously. Since $\omega \in \Omega_{BC}$ and the parameters are now fixed, Lemma~\ref{lem:BC-directions} gives an infinite subsequence $\mathcal{J}_i \subseteq \N$ along which $x^j \to x^*,  v^j_{ex} \to v_i,  \norm{v^j_{ex} - v_i} < \rho_i,  \hat{t}^j \in [t^*/2,\, t^*].$

For $j \in \mathcal{J}_i$ sufficiently large, $\norm{x^j - x^*} < \delta_i$; the feasibility bound above then gives $\tau(x^j;\, v^j_{ex}) \geq \bar{\varepsilon}_i \geq t^*$, so $x^j + t v^j_{ex} \in C$ for every $t \in [0,\, t^*]$.

The key point is that the same strict decrease estimate holds uniformly for every $t \in [t^*/2,\, t^*]$. 
Since $x^j \to x^*$ along $\mathcal{J}_i$, for all sufficiently large $j \in \mathcal{J}_i$ we have $\norm{x^j-x^*}<R_h/4$. For such $j$ and any $t\in[0,t^*]$, $\norm{x^*+t v_i-x^*}=t\le t^*\le R_h/4<R_h,$
and, using $\norm{v_{ex}^j}=1$,
\[
\norm{x^j+t v_{ex}^j-x^*}\le \norm{x^j-x^*}+t\,\norm{v_{ex}^j}
<R_h/4+t^*\le R_h/2<R_h.
\]
Hence both points $x^j+t v_{ex}^j$ and $x^*+t v_i$ lie in $B(x^*,R_h)$, so the $L_h$-Lipschitz bound is valid. Therefore, for any $t \in [t^*/2,\, t^*]$ and all sufficiently large $j \in \mathcal{J}_i$, decompose
\begin{align*}
  h(x^j + t v^j_{ex}) - h(x^j) + \tfrac{\gamma}{2}t^2
  &= \bigl[h(x^j + t v^j_{ex}) - h(x^* + t v_i)\bigr]
   + \bigl[h(x^* + t v_i) - h(x^*)\bigr] \\
  &\quad + \bigl[h(x^*) - h(x^j)\bigr] + \tfrac{\gamma}{2}t^2 \\
  &\leq L_h\bigl(\norm{x^j - x^*} + t\,\norm{v^j_{ex} - v_i}\bigr)
     - \tfrac{\varepsilon}{2}t
     + L_h\norm{x^j - x^*}
     + \tfrac{\gamma}{2}t^2.
\end{align*}
Since $t \in [t^*/2,\, t^*]$, $\tfrac{\varepsilon}{2}t \geq \tfrac{\varepsilon t^*}{4}$; and $t^* \leq \varepsilon/(4\gamma)$ gives $\tfrac{\gamma}{2}t^2 \leq \tfrac{\gamma}{2}(t^*)^2 \leq \tfrac{\varepsilon t^*}{8}$. 
Since $x^j \to x^*$ and $v^j_{ex} \to v_i$ along $\mathcal{J}_i$, and since $[t^*/2,\, t^*]$ is compact and $x^j \to x^*$, $v^j_{ex} \to v_i$, we may pass to a tail subsequence (still denoted $\mathcal{J}_i$) on which, uniformly in $t \in [t^*/2,\, t^*]$,
\[
  L_h\bigl(\norm{x^j - x^*} + t\,\norm{v^j_{ex} - v_i}\bigr) + L_h\norm{x^j - x^*}
  \;\leq\; 2 L_h\norm{x^j - x^*} + L_h\, t^*\norm{v^j_{ex} - v_i}
  \;\leq\; \tfrac{\varepsilon t^*}{16}.
\]
Therefore, for every $t \in [t^*/2,\, t^*]$ we have that
\[
  h(x^j + t v^j_{ex}) - h(x^j) + \tfrac{\gamma}{2}t^2
  \;\leq\; \tfrac{\varepsilon t^*}{16} - \tfrac{\varepsilon t^*}{4} + \tfrac{\varepsilon t^*}{8}
  \;=\; -\tfrac{\varepsilon t^*}{16} \;<\; 0.
\]
Applied at $t = \hat{t}^j \in [t^*/2,\, t^*]$, together with the feasibility established above, this strict negativity forces the two-point comparison to select $t^j_{ex} = \hat{t}^j \geq t^*/2$ for every $j \in \mathcal{J}_i$.

This yields the contradiction: for infinitely many $j \in \mathcal{J}_i$, $t^j_{ex} \geq t^*/2$, so $\sum_k (t^k_{ex})^2 \geq \sum_{j \in \mathcal{J}_i}(t^*/2)^2 = +\infty$, contradicting part~(i). Hence $h'(x^*;\, v_i) \geq 0$.

Since $\{v_i\}$ is dense in $\ri(D_C(x^*)) \cap S^{n-1}$, Lemma~\ref{lem:continuity} (continuity and positive homogeneity of $h'(x^*;\, \cdot)$, together with the relative-interior-to-closure extension) yields $h'(x^*;\, d) \geq 0$ for every $d \in \cl(D_C(x^*))$. Hence $x^*$ is d-stationary. As $x^*$ was an arbitrary accumulation point on this fixed $\omega \in \Omega_{BC}$, every accumulation point is d-stationary on $\Omega_{BC}$, proving the almost-sure claim.
\end{proof}

The preceding argument requires only that the oracle outputs feasible points $z^{k+1} \in C$; otherwise, $\mathcal{O}_C$ may be arbitrary, while d-stationarity is enforced solely by the exploration step.

\section{Oracle Instantiations}\label{sec:oracles}

We illustrate oracle-driven rates versus exploration-driven optimality on DCA, prox-linear methods, and gradient sampling (INGD): in each case the oracle supplies a stationarity measure and rate, while Theorem~\ref{thm:random} upgrades accumulation points to d-stationarity almost surely.


\subsection{DCA for constrained DC programming}\label{sec:dc}
Consider the constrained difference-of-convex problem
\begin{equation}\label{eq:dca}
    \min_{x \in C} h(x) := f_1(x) - f_2(x),
\end{equation}
where $C \subseteq \R^n$ is closed and convex,  $f_1$ is $\mu_1$-strongly convex and $f_2$ is $\mu_2$-strongly convex with $\mu_1 + \mu_2 > 0$, 
and that $h$ is bounded below.

Given $x^k \in C$, the DCA oracle produces
\[
z^{k+1} \in \arg\min_{x \in C} \; f_1(x) - \langle g_2^k, x \rangle, 
\quad g_2^k \in \partial f_2(x^k).
\]
This step characterizes criticality through the fixed-point condition $x^k = z^{k+1}$.



The augmented method applies Algorithm~\ref{alg:random} with this oracle. 
The key observation is that the rate analysis of DCA relies only on the descent inequality
$h(x^{k+1}) \leq h(z^{k+1}),$
which is preserved by the greedy selection step regardless of whether the exploration step is accepted; the proof is deferred to Appendix~\ref{sec:appendix-dca}.



\begin{theorem}[Rate preservation under DCA]\label{thm:dca-rate}
Under the assumptions above, after $N$ iterations of Algorithm~\ref{alg:random} with the DCA oracle, $\min_{0 \leq k \leq N-1} \norm{x^k - z^{k+1}}
\leq 
\sqrt{\frac{2\,(h(x^0) - h^*)}{(\mu_1 + \mu_2)\,N}}.$
\end{theorem}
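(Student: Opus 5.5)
The argument is a one-step sufficient-decrease inequality for the DCA oracle, followed by telescoping. It uses only the greedy property $h(x^{k+1}) \le h(z^{k+1})$, so the exploration step plays no role.

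\textbf{Step 1: one-step bound for the oracle.} Fix $k$ and write $\mu := \mu_1+\mu_2$. The DCA subproblem objective $\phi_k(x) := f_1(x) - \langle g_2^k, x\rangle$ is $\mu_1$-strongly convex, and $z^{k+1}$ minimizes it over $C$. By the optimality condition there is some $s \in \partial f_1(z^{k+1})$ with $\langle s - g_2^k, x^k - z^{k+1}\rangle \ge 0$, using $x^k \in C$ (feasibility of $x^k$ is guaranteed, since both candidates lie in $C$). Strong convexity of $f_1$ gives
\[
f_1(x^k) \ge f_1(z^{k+1}) + \langle s, x^k - z^{k+1}\rangle + \tfrac{\mu_1}{2}\norm{x^k - z^{k+1}}^2 .
\]
Strong convexity of $f_2$ at $x^k$, with $g_2^k \in \partial f_2(x^k)$, gives
\[
f_2(z^{k+1}) \ge f_2(x^k) + \langle g_2^k, z^{k+1} - x^k\rangle + \tfrac{\mu_2}{2}\norm{x^k - z^{k+1}}^2 .
\]
Subtracting and using the optimality inequality to drop the linear terms yields
\[
h(x^k) - h(z^{k+1}) \ge \tfrac{\mu}{2}\norm{x^k - z^{k+1}}^2 .
\]

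\textbf{Step 2: transfer to the accepted iterate.} By greedy selection in \Cref{alg:random}, $h(x^{k+1}) \le h(z^{k+1})$. Hence
\[
h(x^k) - h(x^{k+1}) \ge \tfrac{\mu}{2}\norm{x^k - z^{k+1}}^2 ,
\]
and this holds regardless of whether $y^{k+1}$ or $z^{k+1}$ was chosen.

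\textbf{Step 3: telescope.} Summing over $k = 0,\dots,N-1$ gives
\[
\tfrac{\mu}{2}\sum_{k=0}^{N-1}\norm{x^k - z^{k+1}}^2 \le h(x^0) - h(x^N) \le h(x^0) - h^*,
\]
where $h^*$ is a lower bound of $h$ on $C$, finite by assumption. Bounding the minimum term by the average,
\[
\min_{0 \le k \le N-1}\norm{x^k - z^{k+1}}^2 \le \frac{2\,(h(x^0) - h^*)}{\mu N},
\]
and taking square roots gives the claimed bound.

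The only step that needs care is Step 1. It requires choosing the right subgradient from the optimality condition, namely $0 \in \partial f_1(z^{k+1}) - g_2^k + N_C(z^{k+1})$, which holds under the standing assumption that $f_1$ is finite on a neighborhood of $C$ (or under a mild constraint qualification). It also requires noting that the two strong-convexity moduli appear at different base points ($z^{k+1}$ for $f_1$, $x^k$ for $f_2$), yet both multiply the same quantity $\norm{x^k - z^{k+1}}^2$, so they combine into $\mu_1 + \mu_2$.
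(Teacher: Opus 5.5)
Your proof is correct and follows the paper's argument essentially step for step. You derive the one-step descent $h(x^k)-h(z^{k+1})\ge\frac{\mu_1+\mu_2}{2}\norm{x^k-z^{k+1}}^2$ from strong convexity of $f_1$ at $z^{k+1}$, strong convexity of $f_2$ at $x^k$, and the first-order optimality condition (your variational inequality is the paper's normal-cone term $n^{k+1}\in N_C(z^{k+1})$), then apply greedy selection $h(x^{k+1})\le h(z^{k+1})$ and telescope; your remark that the subdifferential sum rule needs a mild qualification condition makes explicit something the paper leaves implicit.
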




Thus, the augmentation preserves the $O(1/\sqrt{N})$ rate on the natural oracle residual, while Theorem~\ref{thm:random} ensures that every accumulation point is almost surely d-stationary, which is strictly stronger than the criticality guarantee of DCA.
That is, the oracle governs the convergence rate and identifies critical points, while the exploration step 
eliminates spurious stationary points by ruling out feasible descent directions. 
These two mechanisms operate independently.

\subsection{Prox-linear Oracle}\label{subsec:pl-oracle}
We next consider composite objectives of the form
\begin{equation}\label{eq:prox-linear}
    \min_{x \in \R^n} h(x) := c(F(x)),
\end{equation}
where $h$ is bounded below, $F$ is continuously differentiable with Lipschitz Jacobian and $c$ is globally Lipschitz 
and directionally differentiable. 
We assume in addition that $c$ is \emph{Hadamard directionally differentiable} on $\R^m$: for every $v, w \in \R^m$ the limit $ c'(v;\, w) := \lim_{(t,\,w') \to (0^+,\, w)} (c(v + t w') - c(v))/t$
exists, and for each fixed $v$ the map $w \mapsto c'(v;\, w)$ is continuous on $\R^m$.
Since $F$ is continuously differentiable, standard chain rules for Hadamard directional derivatives imply that $h = c \circ F$ is locally Lipschitz (A1), Hadamard directionally differentiable at every $x \in \R^n$ with
\begin{equation}\label{eq:pl-hadamard-chain}
  h'(x;\, d) = c'\!\bigl(F(x);\, \nabla F(x)\, d\bigr),
\end{equation}
and $d \mapsto h'(x;\, d)$ exists and is continuous.
Hence, under these conditions, $h$ satisfies \Cref{ass:A1} and \Cref{ass:A2}.

Given $x^k$, define the local model
$m^k(y) := c\big(F(x^k) + \nabla F(x^k)(y - x^k)\big) + \frac{\rho}{2}\norm{y - x^k}^2,$
and, because $c$ need not be convex, let the oracle return $z^{k+1}$ from an Appendix~\ref{sec:appendix-pl} first-order subproblem variant while enforcing
\begin{equation}\label{eq:pl-model-decrease}
    m^k(z^{k+1}) \leq m^k(x^k) = h(x^k).
\end{equation}

When $\rho$ is sufficiently large, this implies $h(z^{k+1}) \le h(x^k)$ and $h(x^k)-h(z^{k+1}) \ge \alpha\norm{z^{k+1}-x^k}^2$ for some $\alpha>0$.

Applying the same argument as in the DCA case, we obtain (proof is provided in Appendix \ref{subsec:app-pl-descent-and-rate}):

\begin{theorem}[Rate preservation for prox-linear oracles]\label{thm:pl-rate}
Suppose the oracle satisfies \eqref{eq:pl-model-decrease} and the quadratic decrease inequality above. 
Then, after $N$ iterations of Algorithm~\ref{alg:random}, $\min_{0 \leq k \leq N-1} \norm{x^k - z^{k+1}}
\leq 
\sqrt{\frac{h(x^0) - h^*}{\alpha N}}.$
\end{theorem}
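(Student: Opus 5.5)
The argument is a telescoping one, mirroring the DCA case. The only structural input it needs is the chain
\[
h(x^{k+1}) \le h(z^{k+1}) \le h(x^k) - \alpha\norm{z^{k+1}-x^k}^2 .
\]
The first inequality comes from the greedy selection in Algorithm~\ref{alg:random}: $x^{k+1}$ minimizes $h$ over $\{y^{k+1},z^{k+1}\}$, so $h(x^{k+1})\le h(z^{k+1})$ whether or not the exploration candidate $y^{k+1}$ is accepted. The second inequality is the quadratic decrease hypothesis on the prox-linear oracle, which follows from \eqref{eq:pl-model-decrease} when $\rho$ is large. Because the oracle is always queried at the current accepted iterate $x^k$ (not at some internal oracle sequence), the decrease inequality applies exactly at each $k$. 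Combining the two gives, for every $k$,
\[
\alpha\norm{x^k - z^{k+1}}^2 \le h(x^k) - h(x^{k+1}).
\]

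Next, sum this inequality over $k=0,\dots,N-1$. The right-hand side telescopes to $h(x^0)-h(x^N)$. All iterates are feasible (here $C=\R^n$), and $h^*:=\inf h>-\infty$ by the standing boundedness-below assumption (\Cref{ass:A1}), so this is at most $h(x^0)-h^*$. Bounding the sum from below by $N$ times its smallest term yields
\[
\alpha N \min_{0\le k\le N-1}\norm{x^k-z^{k+1}}^2 \le h(x^0)-h^* .
\]
Taking square roots gives the claimed bound $\sqrt{(h(x^0)-h^*)/(\alpha N)}$.

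There is no real obstacle beyond checking that the exploration step never breaks the per-iteration descent. Greedy selection guarantees this, since $h(x^{k+1})$ can only be smaller than $h(z^{k+1})$. The one point needing care is that the quadratic decrease $h(x^k)-h(z^{k+1})\ge\alpha\norm{z^{k+1}-x^k}^2$ is a property of the oracle map applied at an arbitrary input point $x^k$. It does not depend on $x^k$ having been produced by the oracle itself, so it remains valid when $x^k$ came from an accepted exploration step. This is what makes the rate oracle-driven and independent of REP.
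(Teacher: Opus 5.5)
Your proof is correct and follows the paper's argument: greedy selection gives $h(x^{k+1})\le h(z^{k+1})$, and combining this with the quadratic decrease yields $\alpha\norm{x^k-z^{k+1}}^2\le h(x^k)-h(x^{k+1})$. Telescoping, using $h(x^N)\ge h^*$, and bounding the sum below by $N$ times its minimum then gives the stated rate, exactly as in the proof of Theorem~\ref{thm:dca-rate}; your explicit per-iteration inequality is in fact stated more cleanly than the paper's (QD), which only restates the oracle-level decrease.
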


As before, the exploration step does not enter the rate proof. It only modifies the iterate selection, 
while preserving the descent structure required by the oracle analysis.

The prox-linear framework includes a broad class of nonsmooth nonconvex problems; different inner solvers may yield different notions of stationarity (e.g., Clarke or Mordukhovich stationarity).
Regardless of these distinctions, Theorem~\ref{thm:random} upgrades all accumulation points of the augmented method to d-stationarity almost surely.

\subsection{Gradient Sampling Oracle and Goldstein Stationarity}\label{subsec:gs-oracle}

Consider the class $\mathcal{F}(\Delta, L)$ of $L$-Lipschitz, directionally differentiable, and bounded-below functions studied by \citet{Zhang2020}. 
The oracle is the Interpolated Normalized Gradient Descent (INGD) method of \citet{Zhang2020}, which at each iteration performs a local randomized search based on gradient interpolation and produces a feasible output $z^{k+1}$. 
This oracle requires only directional derivatives and function values, and does not rely on any DC decomposition or smooth structure.

Every $h \in \mathcal{F}(\Delta, L)$ is globally Lipschitz (A1). 
By construction, functions in this class are Hadamard directionally differentiable, which implies A2.
Thus, Theorem~\ref{thm:random} applies directly to this setting.

\citet{Zhang2020} show that INGD finds a $(\delta, \varepsilon)$-Goldstein stationary point in 
$\tilde{O}(\Delta L^2 \varepsilon^{-3} \delta^{-1})$
oracle calls, where $\partial_\delta h(x) := \conv\{\nabla h(y) : y \in B(x,\delta)\}$ is the Goldstein $\delta$-subdifferential \citep{Goldstein1977}. 
Their analysis relies only on two properties: (i)~boundedness below of $h$, and (ii)~monotonic decrease along accepted steps.
Both are preserved under Algorithm~\ref{alg:random}: boundedness is unchanged, and greedy selection gives $h(x^{k+1}) \leq h(z^{k+1}) \leq h(x^k)$ independently of whether the exploration step is accepted.
Therefore, the $\tilde{O}(\Delta L^2 \varepsilon^{-3} \delta^{-1})$ complexity bound carries over without modification.

INGD targets Goldstein stationarity, a first-order condition based on approximate subdifferentials. 
The exploration step preserves this complexity guarantee while upgrading all accumulation points to almost-sure d-stationarity (Theorem~\ref{thm:random}). 
This demonstrates compatibility beyond structured settings, including purely derivative-based sampling schemes.

\section{Numerical Experiments}\label{sec:experiments-story}

Our experiments evaluate whether a single exploration step improves the terminal solution relative to the base oracle across representative benchmarks, using long runs to approach the asymptotic regime implied by the theory.
The full details are deferred to Appendix \ref{sec:appendix-experiments-v2}.


\paragraph{Protocol in brief.}
For each instance, we compare the base oracle with its augmented version over three seeds, 
reporting $\Delta = h_{\text{base}} - \mathrm{median}_s\, h_{\text{aug},s}$ 
(positive values favor augmentation). \Cref{tab:experiments-story-summary} summarizes win/tie/loss counts, 
gap statistics, and d-stationarity failures.


\paragraph{Trimmed lasso.}
In trimmed lasso, the difference between criticality and d-stationarity is evident in our diagnostics. In the focused setting
($m=50$, $n=100$, $k=5$, $\lambda=1$), plain DCA fails the d-stationarity
diagnostic on $99/100$ instances versus $4/100$ with augmentation, with objective outcomes $99/1/0$ wins/ties/losses under $\Delta$.
Directions are drawn from the gauss-axis mixture specified in \Cref{sec:appendix-experiments-v2}, which also records matched sphere runs under the same budgets -- uniform sphere sampling accepts virtually no exploration steps and behaves similarly to DCA.
The high base failure rate alongside the strong augmentation wins is consistent with a standard DC picture \citep{deOliveira2020}: where the nonsmooth term has nonsingleton subgradient, criticality ($\partial f(x)\cap\partial g(x)\neq\emptyset$) can hold without d-stationarity ($\partial g(x)\subseteq\partial f(x)$).

\paragraph{Least trimmed squares.}
Focused LTS ($m=100$, $n=50$, $q=10$): zero d-stationarity failures for base and augmented runs, $9/91/0$ wins/ties/losses, and a large win-conditional median $\Delta$ ($+52.2$). Augmentation is mostly inert but never hurts and occasionally finds a substantially better basin.

\paragraph{ReLU regression.}
Two focused ReLU settings ($200$ comparisons) with a prox-linear inner oracle and an exact DC subdifferential-inclusion d-stationarity check: zero d-stationarity failures, $44/156/0$ wins/ties/losses, mean $\Delta{=}+0.96$, McNemar $p{=}5.7\cdot 10^{-14}$, showing the same augmentation carries beyond DCA.

\begin{table}[t]
\centering
\caption{Headline outcomes across the three focused benchmarks.
\emph{w/t/l} denotes win/tie/loss under
$\Delta = h_{\mathrm{base}} - \mathrm{median}_s\, h_{\mathrm{aug},s}$.
\emph{\# non-d-stat (base / aug)} counts runs that fail the benchmark
d-stationarity diagnostic. ReLU aggregates two settings ($200$ paired
comparisons total).}
\label{tab:experiments-story-summary}
\small
\setlength{\tabcolsep}{4pt}
\begin{tabular}{l c c c c c c}
\toprule
Experiment & $n_{\mathrm{inst}}$ & w/t/l & mean $\Delta$ & med.\ win $\Delta$ & McNemar $p$ & \#~non-d-stat (base / aug) \\
\midrule
Trimmed lasso, $\lambda{=}1$        & 100 & 99/1/0    & $+0.29$ & $+0.27$ & $1.6 \cdot 10^{-30}$ & $99 / 4$ \\
LTS, $\sigma_{\mathrm{clean}}{=}4$  & 100 & 9/91/0    & $+3.96$ & $+52.2$ & $2.0 \cdot 10^{-3}$  & $0 / 0$  \\
ReLU regression                     & 200 & 44/156/0  & $+0.96$ & $+1.26$ & $5.7 \cdot 10^{-14}$ & $0 / 0$  \\
\bottomrule
\end{tabular}
\end{table}


\section{Conclusions and Limitations}\label{sec:limitations}\label{sec:conclusion}
\paragraph{Conclusions.} We showed that a single random feasible-direction step suffices to enforce almost-sure convergence of accumulation points to d-stationarity, independently of the underlying oracle. 
The key mechanism is a structural separation: the oracle determines convergence rates and practical performance, while exploration enforces asymptotic optimality at negligible cost. 
This separation is exact—rate guarantees of the base method are preserved via the inequality $h(x^{k+1}) \leq h(z^{k+1})$, while d-stationarity follows from the exploration step alone. 
The framework applies uniformly across oracle classes, including DCA, prox-linear methods, and gradient sampling, without modifying their analysis. 
These results suggest that lightweight directional exploration can serve as a modular tool for strengthening optimality guarantees. 
In particular, integrating structured feasible-direction mechanisms with model-based updates is a promising direction for advancing nonsmooth nonconvex optimization.

\paragraph{Limitations.} The guarantees are asymptotic. Theorem~\ref{thm:random} ensures almost-sure d-stationarity of accumulation points, but provides no finite-iteration bound for approaching or certifying it. The rates in Section~\ref{sec:oracles} concern the base oracle residual, not d-stationarity; thus finite-time proximity to a d-stationary point is not guaranteed.

Finite-time performance depends on the sampling distribution $\mathcal{D}_v$. Although the theory requires only full support on $S^{n-1} \cap L$, this may be ineffective in practice. In trimmed lasso, the uniform sphere sampler accepts virtually no exploration steps, whereas an axis-biased Gaussian mixture yields consistent gains by exploiting $\ell_1$-type sparsity. 
When such structure is unavailable and the base oracle is treated as a black box, designing an effective sampler is nontrivial, and uniform sphere sampling may be ineffective in practice.

\begin{ack}
No external funding was received for this work. The authors declare no competing financial interests.
\end{ack}

\bibliographystyle{plainnat}
\bibliography{references}

\appendix
\section{Proofs of \Cref{sec:convergence}}
\label{sec:appendix Proofs of}

\begin{proof}[Proof of Lemma \ref{lem:step-bound}]
Let $k := \dim(L)$. Since $v \in \ri(D_C(x^*))$, there exists $r_0 > 0$ such that
\[
B(v, r_0) \cap L \subseteq \ri(D_C(x^*)).
\]
Pick $k+1$ directions $d_0, \ldots, d_k \in B(v, r_0) \cap L$ whose convex hull is a $k$-dimensional simplex in $L$ containing $v$ in its relative interior. Then
\[
\conv\{d_0, \ldots, d_k\} \subseteq \ri(D_C(x^*)).
\]
Since each $d_j \in D_C(x^*)$, choose $\varepsilon_j > 0$ with $x^* + \varepsilon_j d_j \in C$, and set $\bar{\varepsilon} := \min_j \varepsilon_j > 0$. By convexity of $C$, $p_j := x^* + \bar{\varepsilon}\, d_j \in C$ for each $j$, and for any convex combination $d = \sum_j \lambda_j d_j$,
\[
  x^* + \bar{\varepsilon}\, d = \sum_{j} \lambda_j\, p_j \in C.
\]
Since $v$ lies in the relative interior of the simplex, there exists $\rho_0 > 0$ with
\[
B(v, \rho_0) \cap L \subseteq \conv\{d_0, \ldots, d_k\},
\]
and hence $x^* + \bar{\varepsilon}\, w \in C$ for every $w \in B(v, \rho_0) \cap L$. The map $w \mapsto x^* + \bar{\varepsilon}\, w$ sends $B(v,\rho_0)\cap L$ onto
\[
B(x^* + \bar{\varepsilon}\, v,\; \bar{\varepsilon}\rho_0)\cap \aff(C),
\]
and therefore
\[
  B\!\left(x^* + \bar{\varepsilon}\, v,\; \bar{\varepsilon}\rho_0\right) \cap \aff(C) \subseteq C.
\]

Set $\rho := \rho_0 / 2$ and $\delta := \bar{\varepsilon}\rho_0 / 2$. Then (i) holds because
\[
B(v,\, \rho) \cap L \subseteq B(v,\, \rho_0) \cap L \subseteq \conv\{d_0, \ldots, d_k\} \subseteq \ri(D_C(x^*)).
\]
For (ii), fix $x \in C$ with $\norm{x - x^*} < \delta$ and $w \in B(v,\, \rho) \cap L$. Then
\[
  \norm{(x + \bar{\varepsilon}\, w) - (x^* + \bar{\varepsilon}\, v)} \leq \norm{x - x^*} + \bar{\varepsilon}\norm{w - v} < \delta + \bar{\varepsilon}\rho = \bar{\varepsilon}\rho_0,
\]
and since $x \in C \subseteq \aff(C)$ and $w \in L$, we have $x + \bar{\varepsilon}\, w \in \aff(C)$; hence
\[
x + \bar{\varepsilon}\, w \in B(x^* + \bar{\varepsilon}\, v,\, \bar{\varepsilon}\rho_0) \cap \aff(C) \subseteq C.
\]
Convexity then gives
\[
  x + t w = (1 - t/\bar{\varepsilon})\, x + (t/\bar{\varepsilon})\,(x + \bar{\varepsilon}\, w) \in C \qquad \text{for every } t \in [0,\, \bar{\varepsilon}],
\]
so $\tau(x;\, w) \geq \bar{\varepsilon}$.
\end{proof}

\medskip

\begin{proof}[Proof of Lemma \ref{lem:continuity}]
Let $K := D_C(x^*)$, a convex cone. 
By Lemma~\ref{ass:A3}, the map $d \mapsto h'(x^*;\, d)$ is continuous.
We next show positive homogeneity. For any $\alpha > 0$,
\[
  h'(x^*;\, \alpha d)
  = \lim_{t\downarrow 0}\frac{h(x^* + t\alpha d)-h(x^*)}{t}
  = \alpha \lim_{t\downarrow 0}\frac{h(x^* + t d)-h(x^*)}{t}
  = \alpha h'(x^*;\, d).
\]

For the extension statement, recall that for any convex set $K$,
\[
\cl(\ri(K)) = \cl(K),
\]
where the relative interior is taken with respect to $\aff(K)=L$. Let $d \in \cl(K)$. Then there exists a sequence $v_m \in \ri(K)$ with $v_m \to d$. By continuity,
\[
  h'(x^*;\, d) = \lim_{m\to\infty} h'(x^*;\, v_m) \geq 0,
\]
since $h'(x^*;\, v_m) \geq 0$ for all $m$.
\end{proof}

\section{Proofs for the DCA oracle rate (Section~\ref{sec:dc})}
\label{sec:appendix-dca}

\begin{lemma}[Descent at the oracle output]\label{lem:dca-descent}
For each $k$, $h(x^k) - h(z^{k+1}) \geq \frac{\mu_1 + \mu_2}{2}\norm{x^k - z^{k+1}}^2$.
\end{lemma}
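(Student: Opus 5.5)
We prove Lemma~\ref{lem:dca-descent} by combining the optimality condition of the convex DCA subproblem with the strong convexity of $f_1$ and $f_2$. Write $z := z^{k+1}$ and $g := g_2^k \in \partial f_2(x^k)$. The plan has three steps: a lower bound on $f_1$ along the step from $z$ to $x^k$, an upper bound on $f_2$ at $z$, and subtraction of the two.

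First, $z$ minimizes $\phi(x) := f_1(x) - \inner{g}{x} + \iota_C(x)$. This function is $\mu_1$-strongly convex (possibly with $\mu_1 = 0$). Its minimizer therefore satisfies the quadratic growth inequality $\phi(x) \ge \phi(z) + \frac{\mu_1}{2}\norm{x - z}^2$ for all $x \in C$. I would justify this through the optimality condition $0 \in \partial \phi(z)$: there is some $s \in \partial f_1(z)$ with $\inner{s - g}{x - z} \ge 0$ for all $x \in C$, and this combines with the strong-convexity subgradient inequality for $f_1$. Evaluating at $x = x^k \in C$ gives
\[
f_1(x^k) - \inner{g}{x^k} \ge f_1(z) - \inner{g}{z} + \tfrac{\mu_1}{2}\norm{x^k - z}^2 .
\]

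Second, since $g \in \partial f_2(x^k)$ and $f_2$ is $\mu_2$-strongly convex, the subgradient inequality at $x^k$ evaluated at $z$ gives
\[
f_2(z) \ge f_2(x^k) + \inner{g}{z - x^k} + \tfrac{\mu_2}{2}\norm{z - x^k}^2 .
\]

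Third, rearrange the first inequality to $f_1(x^k) - f_1(z) \ge \inner{g}{x^k - z} + \frac{\mu_1}{2}\norm{x^k - z}^2$. Rearrange the second to $-f_2(x^k) + f_2(z) \ge \inner{g}{z - x^k} + \frac{\mu_2}{2}\norm{z - x^k}^2$. Adding them, the linear terms cancel, and the left side equals $h(x^k) - h(z)$. This yields $h(x^k) - h(z^{k+1}) \ge \frac{\mu_1+\mu_2}{2}\norm{x^k - z^{k+1}}^2$.

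The main point needing care is the first step under the constraint $C$. One must either invoke the subdifferential sum rule for $f_1 + \iota_C$, which requires a qualification condition such as $f_1$ being finite on a neighborhood of $C$, or avoid subdifferentials entirely. To avoid them, compare $\phi(z)$ with $\phi((1-\theta)z + \theta x^k)$ for $\theta \in (0,1)$ using the strong-convexity inequality, divide by $\theta$, and let $\theta \downarrow 0$. This gives the growth bound directly from minimality and convexity of $C$. Existence of $z$ is implicit in the oracle definition, so it is assumed.
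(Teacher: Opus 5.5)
Your proof is correct and follows essentially the same route as the paper. The paper obtains your first inequality by writing the subproblem optimality condition as $s^{k+1}+n^{k+1}=g_2^k$ with $n^{k+1}\in N_C(z^{k+1})$ and dropping the normal-cone term from the $\mu_1$-strong-convexity inequality for $f_1$; it then subtracts the $\mu_2$-strong-convexity bound for $f_2$ exactly as you do, and your $\theta\downarrow 0$ variant simply reaches the same growth bound without the subdifferential sum rule (which the paper uses implicitly).
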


\begin{proof}
By $\mu_1$-strong convexity of $f_1$ at the subgradient $s^{k+1} \in \partial f_1(z^{k+1})$:
\[
  f_1(x^k) \geq f_1(z^{k+1}) + \inner{s^{k+1}}{x^k - z^{k+1}} + \tfrac{\mu_1}{2}\norm{x^k - z^{k+1}}^2.
\]
First-order optimality of the DCA subproblem at $z^{k+1}$ gives $s^{k+1} \in \partial f_1(z^{k+1})$, $n^{k+1} \in N_C(z^{k+1})$, and
\begin{equation}\label{eq:OC}
  s^{k+1} + n^{k+1} = g^k_2.
\end{equation}
Substituting $s^{k+1} = g^k_2 - n^{k+1}$ from~\eqref{eq:OC} and using $\inner{n^{k+1}}{x^k - z^{k+1}} \leq 0$ (since $n^{k+1} \in N_C(z^{k+1})$ and $x^k \in C$):
\begin{equation}\label{eq:f1-simplified}
  f_1(x^k) \geq f_1(z^{k+1}) + \inner{g^k_2}{x^k - z^{k+1}} + \tfrac{\mu_1}{2}\norm{x^k - z^{k+1}}^2.
\end{equation}
By $\mu_2$-strong convexity of $f_2$ with subgradient $g^k_2 \in \partial f_2(x^k)$:
\begin{equation}\label{eq:f2-bound}
  f_2(x^k) \leq f_2(z^{k+1}) + \inner{g^k_2}{x^k - z^{k+1}} - \tfrac{\mu_2}{2}\norm{x^k - z^{k+1}}^2.
\end{equation}
Subtracting~\eqref{eq:f2-bound} from~\eqref{eq:f1-simplified}, the $\inner{g^k_2}{\cdot}$ terms cancel:
$h(x^k) - h(z^{k+1}) \geq \frac{\mu_1 + \mu_2}{2}\norm{x^k - z^{k+1}}^2$.
\end{proof}

\begin{proof}[Proof of Theorem~\ref{thm:dca-rate}]
By greedy selection, $h(x^{k+1}) \leq h(z^{k+1})$. Combining with Lemma~\ref{lem:dca-descent}:
\[
  h(x^k) - h(x^{k+1}) \geq \frac{\mu_1 + \mu_2}{2}\norm{x^k - z^{k+1}}^2.
\]
Summing over $k = 0, \ldots, N-1$ and telescoping:
\[
  \frac{\mu_1 + \mu_2}{2}\sum_{k=0}^{N-1}\norm{x^k - z^{k+1}}^2 \leq h(x^0) - h(x^N) \leq h(x^0) - h^*.
\]
Taking the minimum over $k$ (at most the average) and the square root completes the proof.
\end{proof}

\begin{remark}[Oracle residual $\norm{x^k - z^{k+1}}$]\label{rem:pl-oracle-residual}\label{rem:dca-optimality-measure}
For the DCA oracle, $\norm{x^k - z^{k+1}} = 0$ if and only if $x^k$ is critical (a fixed point of the oracle subproblem). Thus $\norm{x^k - z^{k+1}}$ is a quantitative residual for distance to criticality, and Theorem~\ref{thm:dca-rate} gives the preserved ergodic $O(1/\sqrt{N})$ rate in exactly this residual.

This rate statement concerns a finite-iteration optimality measure, whereas the asymptotic guarantee of the augmented method (Theorem~\ref{thm:random}) is almost-sure d-stationarity of accumulation points. There is no conflict: the base algorithm converges to criticality, while augmentation upgrades the limit notion to d-stationarity, which is strictly stronger in general (Remark~\ref{rem:stationarity-comparison}).
\end{remark}

\section{Prox-linear-type oracles: model decrease, descent, and limiting stationarity}
\label{sec:appendix-pl}

This appendix supports Section~\ref{subsec:pl-oracle}.
Throughout, $z^{k+1}$ denotes the candidate produced by a prox-linear-type oracle at the current iterate $x^k$ of Algorithm~\ref{alg:random} (unconstrained $C=\R^n$ in~\eqref{eq:prox-linear}).
We write $\ell^k(y) := F(x^k) + \nabla F(x^k)(y-x^k)$ and $m^k(y) := c(\ell^k(y)) + \tfrac{\rho}{2}\norm{y-x^k}^2$.
The descent calculation is classical for Lipschitz composite models \citep{Drusvyatskiy2019}.

\subsection{Lemma~\ref{lem:pl-descent} and Theorem~\ref{thm:pl-rate}}\label{subsec:app-pl-descent-and-rate}

\begin{lemma}[Quadratic descent from model decrease]\label{lem:pl-descent}
Under the Lipschitz assumptions on $c$ and $F$ and with $\rho > L_c L_F$, if~\eqref{eq:pl-model-decrease} holds at iteration $k$, then
\begin{equation}\label{eq:pl-qdescent-state}
  h(x^k) - h(z^{k+1}) \;\geq\; \tfrac{1}{2}(\rho - L_c L_F)\,\norm{z^{k+1} - x^k}^2.
\end{equation}
\end{lemma}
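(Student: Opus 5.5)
We prove the inequality by the standard composite-model argument: compare $h$ with the linearized model $m^k$ at $z^{k+1}$, and then use the model decrease \eqref{eq:pl-model-decrease}. Write $L_c$ for the Lipschitz constant of $c$ and $L_F$ for the Lipschitz constant of $\nabla F$. The first ingredient is the Taylor-type bound
\[
\norm{F(y) - \ell^k(y)} \le \tfrac{L_F}{2}\norm{y - x^k}^2 \qquad \text{for all } y \in \R^n.
\]
To obtain it, write $F(y) - F(x^k) - \nabla F(x^k)(y-x^k) = \int_0^1 \bigl(\nabla F(x^k + s(y-x^k)) - \nabla F(x^k)\bigr)(y-x^k)\,ds$, bound the integrand by $L_F s\norm{y-x^k}^2$, and integrate.

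Combining this with the Lipschitz continuity of $c$, at $y = z^{k+1}$ we get
\[
h(z^{k+1}) = c(F(z^{k+1})) \le c(\ell^k(z^{k+1})) + \tfrac{L_c L_F}{2}\norm{z^{k+1}-x^k}^2 .
\]
Next we add and subtract $\tfrac{\rho}{2}\norm{z^{k+1}-x^k}^2$ to rewrite the right-hand side as
\[
m^k(z^{k+1}) - \tfrac{1}{2}(\rho - L_cL_F)\norm{z^{k+1}-x^k}^2 .
\]
By \eqref{eq:pl-model-decrease}, $m^k(z^{k+1}) \le m^k(x^k)$. Moreover $m^k(x^k) = c(\ell^k(x^k)) = c(F(x^k)) = h(x^k)$, because $\ell^k(x^k) = F(x^k)$ and the proximal term vanishes at $y = x^k$. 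Substituting these two facts and rearranging gives \eqref{eq:pl-qdescent-state}. The coefficient $\tfrac12(\rho - L_cL_F)$ is positive since $\rho > L_c L_F$, and it plays the role of the $\alpha$ used in Theorem~\ref{thm:pl-rate}.

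There is no real obstacle; the only care needed concerns the constants. If $L_F$ denotes the Lipschitz constant of the Jacobian, as assumed here, the remainder bound above holds exactly and the constant in \eqref{eq:pl-qdescent-state} matches. If the paper intends $L_F$ to be some other constant, then only the numerical factor changes, and the proof structure is unaffected. The argument uses neither convexity of $c$ nor exact minimization of $m^k$; it needs only the model decrease \eqref{eq:pl-model-decrease}.
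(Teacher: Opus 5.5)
Your proof is correct and follows essentially the same argument as the paper. Both combine the remainder bound $\norm{F(z^{k+1})-\ell^k(z^{k+1})}\le \tfrac{L_F}{2}\norm{z^{k+1}-x^k}^2$, the Lipschitz continuity of $c$, and the model decrease with $m^k(x^k)=h(x^k)$; the only difference is that you derive the remainder bound from the integral form of Taylor's theorem, while the paper cites it.
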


\begin{proof}
From $m^k(z^{k+1}) \leq m^k(x^k) = h(x^k)$,
\begin{equation}\label{eq:app-pl-step1}
  h(x^k) - c(\ell^k(z^{k+1})) \;\geq\; \tfrac{\rho}{2}\norm{z^{k+1}-x^k}^2.
\end{equation}
By the Lipschitz Jacobian estimate for $F$ \citep{Drusvyatskiy2019},
$\norm{F(z^{k+1}) - \ell^k(z^{k+1})} \leq \tfrac{L_F}{2}\norm{z^{k+1}-x^k}^2$, and Lipschitzness of $c$ gives
$c(\ell^k(z^{k+1})) \geq c(F(z^{k+1})) - L_c \norm{F(z^{k+1})-\ell^k(z^{k+1})}
\geq h(z^{k+1}) - \tfrac{L_c L_F}{2}\norm{z^{k+1}-x^k}^2$.
Substituting into~\eqref{eq:app-pl-step1} yields~\eqref{eq:pl-qdescent-state}.
If moreover $h$ is bounded below and $h(x^{k+1}) \leq h(z^{k+1})$ at every $k$, then summing the inequality of Lemma~\ref{lem:pl-descent} gives $\sum_k \norm{z^{k+1}-x^k}^2 < \infty$ and hence $\norm{z^{k+1}-x^k}\to 0$.
\end{proof}

\begin{proof}[Proof of Theorem~\ref{thm:pl-rate}]
Greedy selection gives $h(x^{k+1}) \leq h(z^{k+1})$. Combining with~\eqref{eq:pl-qdescent-state} yields
\begin{equation}
  h(x^k) - h(z^{k+1}) \geq \alpha\norm{x^k - z^{k+1}}^2 \tag{QD}\label{eq:QD}
\end{equation}
 with $\alpha = \tfrac{1}{2}(\rho - L_c L_F)$. Telescoping the same way as in Theorem~\ref{thm:dca-rate} proves the bound.
\end{proof}

\begin{assumption}[PL-BDD]\label{ass:app-pl-bdd}
The sublevel set $\{x \in \R^n : h(x) \leq h(x^0)\}$ is bounded.
\end{assumption}

\begin{remark}
Under greedy selection, $h(x^{k+1}) \leq h(x^k)$ for all $k$, so Assumption~\ref{ass:app-pl-bdd} implies that $\{x^k\}$ is bounded and admits accumulation points.
This assumption is used only to guarantee nonempty accumulation-point limits in the results below; the limit identities along any convergent subsequence do not otherwise require global boundedness of $\{x^k\}$.
\end{remark}

\paragraph{Reading guide for the next theorems.}\label{rem:app-pl-purpose}
The next three theorems identify what vanishing prox-linear residual
$\norm{z^{k+1}-x^k}$ implies at accumulation points under three inner-termination templates: DC, Clarke, and Mordukhovich.
This is the same residual that appears in Lemma~\ref{lem:pl-descent} and Theorem~\ref{thm:pl-rate}, so the ergodic $O(1/\sqrt{N})$ bound is a rate statement in a concrete optimality metric.
In the augmented algorithm, accumulation points are upgraded further to almost-sure d-stationarity (Theorem~\ref{thm:random}), which is strictly stronger in general (Remark~\ref{rem:stationarity-comparison}; see also Remark~\ref{rem:pl-oracle-residual}).

\subsection{DC outer function}

\begin{assumption}[PL-DC1]\label{ass:app-pl-dc-c}
The outer function $c \colon \R^m \to \R$ is DC: $c(y)=f(y)-g(y)$ with $f,g \colon \R^m \to \R$ convex and finite-valued, and $c$ is globally Lipschitz with constant $L_c>0$.
\end{assumption}

\begin{assumption}[PL-DC2]\label{ass:app-pl-dc-F}
The inner map $F \colon \R^n \to \R^m$ is continuously differentiable and $\nabla F$ is $L_F$-Lipschitz in operator norm, as in Lemma~\ref{lem:pl-descent}.
\end{assumption}

\begin{assumption}[PL-DC3]\label{ass:app-pl-dc-rho}
The parameter satisfies $\rho > L_c L_F$.
\end{assumption}

\paragraph{Inner termination (DC criticality of the model).}
Fix $k$ and decompose $m^k = \varphi^k - \psi^k$ with
$\varphi^k(y) := f(\ell^k(y)) + \tfrac{\rho}{2}\norm{y-x^k}^2$ and $\psi^k(y) := g(\ell^k(y))$ (both convex).
The oracle output $z^{k+1}$ is \emph{DC critical} for $m^k$ if
\begin{equation}\label{eq:app-pl-dc-inner}
  \nabla F(x^k)^\top \xi^{k+1} + \rho(z^{k+1}-x^k) = \nabla F(x^k)^\top \eta^{k+1}
\end{equation}
for some $\xi^{k+1} \in \partial f(\ell^k(z^{k+1}))$ and $\eta^{k+1} \in \partial g(\ell^k(z^{k+1}))$, and satisfies~\eqref{eq:pl-model-decrease}.

\begin{theorem}[DC criticality at accumulation points]\label{thm:app-pl-dc}
Under Assumptions~\ref{ass:app-pl-dc-c}--\ref{ass:app-pl-dc-rho} and~\ref{ass:app-pl-bdd}, suppose each oracle output $z^{k+1}$ satisfies~\eqref{eq:app-pl-dc-inner} and~\eqref{eq:pl-model-decrease}.
Then every accumulation point $x^*$ of $\{x^k\}$ satisfies
\begin{equation}\label{eq:app-pl-dc-stat}
  \nabla F(x^*)^\top \partial f(F(x^*)) \;\cap\; \nabla F(x^*)^\top \partial g(F(x^*)) \;\neq\; \emptyset,
\end{equation}
i.e., DC criticality of $h=f\circ F - g\circ F$ at $x^*$.
\end{theorem}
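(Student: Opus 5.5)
The plan is to pass to the limit in the inner DC-criticality condition \eqref{eq:app-pl-dc-inner} along a subsequence converging to $x^*$, using that the residual $\norm{z^{k+1}-x^k}$ vanishes and that convex subdifferentials are locally bounded with closed graph.

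\textbf{Step 1: the residual vanishes.} By Lemma~\ref{lem:pl-descent}, each oracle step satisfies \eqref{eq:pl-qdescent-state} with $\alpha=\tfrac12(\rho-L_cL_F)>0$, where positivity is Assumption~\ref{ass:app-pl-dc-rho}. Greedy selection gives $h(x^{k+1})\le h(z^{k+1})$, and $h$ is bounded below. Telescoping, as at the end of the proof of Lemma~\ref{lem:pl-descent}, yields $\sum_k\norm{z^{k+1}-x^k}^2<\infty$, and hence $\norm{z^{k+1}-x^k}\to0$.

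\textbf{Step 2: extract a convergent subsequence.} Assumption~\ref{ass:app-pl-bdd} together with monotonicity keeps $\{x^k\}$ bounded. Let $x^{k}\to x^*$ along a subsequence $K$. Then $z^{k+1}\to x^*$ along $K$. Continuity of $F$ and $\nabla F$ gives
\[
\ell^k(z^{k+1}) = F(x^k)+\nabla F(x^k)(z^{k+1}-x^k)\to F(x^*),
\]
and also $\nabla F(x^k)\to\nabla F(x^*)$ along $K$.

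\textbf{Step 3: pass to the limit in \eqref{eq:app-pl-dc-inner}.} Since $f$ and $g$ are finite convex functions on $\R^m$, their subdifferentials are locally bounded on compact sets. Because $\ell^k(z^{k+1})$ stays in a bounded set, the sequences $\xi^{k+1}$ and $\eta^{k+1}$ are bounded. Passing to a further subsequence, $\xi^{k+1}\to\bar\xi$ and $\eta^{k+1}\to\bar\eta$. The outer semicontinuity (closed graph) of the convex subdifferential then gives $\bar\xi\in\partial f(F(x^*))$ and $\bar\eta\in\partial g(F(x^*))$. Taking limits in \eqref{eq:app-pl-dc-inner}, the term $\rho(z^{k+1}-x^k)$ vanishes, and we obtain
\[
\nabla F(x^*)^\top\bar\xi=\nabla F(x^*)^\top\bar\eta .
\]
This is exactly the nonempty intersection claimed in \eqref{eq:app-pl-dc-stat}.

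\textbf{Main obstacle.} The delicate point is justifying boundedness of the subgradient sequences and the closedness of the subdifferential graph along a moving base point. Both follow from finite-valuedness and convexity of $f$ and $g$ (Rockafellar, Theorems 24.4 and 24.7). The global Lipschitz property of $c$ is not needed for boundedness; it enters only through Lemma~\ref{lem:pl-descent}. The remaining steps are routine limits.
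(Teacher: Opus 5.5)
Your proposal is correct and follows essentially the same route as the paper. Both arguments get the vanishing residual $\norm{z^{k+1}-x^k}\to 0$ from Lemma~\ref{lem:pl-descent} plus greedy selection, then pass to the limit in \eqref{eq:app-pl-dc-inner} along a subsequence, using local boundedness and outer semicontinuity of the convex subdifferentials of $f$ and $g$ at $\ell^k(z^{k+1})\to F(x^*)$.
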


\begin{proof}
By Lemma~\ref{lem:pl-descent} and greedy selection, $\sum_k \norm{z^{k+1}-x^k}^2 < \infty$, hence $\norm{z^{k+1}-x^k}\to 0$.
Also $h(x^{k+1}) \leq h(x^k)$, so $h(x^k) \leq h(x^0)$ for all $k$ and $\{x^k\}$ lies in a compact set by Assumption~\ref{ass:app-pl-bdd}; thus $\{x^k\}$ has accumulation points.

Let $x^{k_j} \to x^*$. Since $\norm{z^{k_j+1}-x^{k_j}}\to 0$, also $z^{k_j+1} \to x^*$.
Along $k_j$, $\nabla F(x^{k_j}) \to \nabla F(x^*)$.
Set $y^j := \ell^{k_j}(z^{k_j+1}) = F(x^{k_j}) + \nabla F(x^{k_j})(z^{k_j+1}-x^{k_j}) \to F(x^*)$.
For $\xi^{k_j+1} \in \partial f(y^j)$ and $\eta^{k_j+1} \in \partial g(y^j)$, the sequences $\{\xi^{k_j+1}\}$ and $\{\eta^{k_j+1}\}$ are bounded: for all large $j$, $y^j$ lies in a fixed compact set, and convex finite $f,g$ are Lipschitz on compacts, hence their subdifferentials are uniformly bounded there.
Passing to a subsequence, $\xi^{k_j+1} \to \xi^*$ and $\eta^{k_j+1} \to \eta^*$, and outer semicontinuity of convex subdifferentials \citep[Theorem~24.4]{Rockafellar1997} yields $\xi^* \in \partial f(F(x^*))$ and $\eta^* \in \partial g(F(x^*))$.
The proximal term $\rho(z^{k_j+1}-x^{k_j}) \to 0$.
Passing to the limit in~\eqref{eq:app-pl-dc-inner} gives $\nabla F(x^*)^\top \xi^* = \nabla F(x^*)^\top \eta^*$, which is~\eqref{eq:app-pl-dc-stat}.
\end{proof}


\subsection{Clarke outer function}

\begin{assumption}[PL-C1]\label{ass:app-pl-clarke-c}
The function $c \colon \R^m \to \R$ is globally Lipschitz with constant $L_c>0$ (hence locally Lipschitz and therefore Clarke subdifferentiable).
\end{assumption}

\begin{assumption}[PL-C2]\label{ass:app-pl-clarke-F}
The map $F$ satisfies the same smoothness as in Lemma~\ref{lem:pl-descent}, and $\nabla F(x)$ is surjective (full row rank $m$) for every $x \in \R^n$.
\end{assumption}

\begin{assumption}[PL-C3]\label{ass:app-pl-clarke-rho}
$\rho > L_c L_F$, and $h=c\circ F$ is bounded below on $\R^n$ (equivalently, $h^* > -\infty$ in the notation of Section~\ref{subsec:pl-oracle}).
\end{assumption}

\paragraph{Inner termination (Clarke stationarity of the model).}
The oracle output $z^{k+1}$ satisfies
\begin{equation}\label{eq:app-pl-clarke-inner}
  0 \in \partialC\!\Bigl( c \circ \ell^k + \tfrac{\rho}{2}\norm{\,\cdot - x^k}^2 \Bigr)(z^{k+1}),
\end{equation}
together with~\eqref{eq:pl-model-decrease}.
Under surjectivity of $\nabla F(x^k)$, the Clarke chain rule \citep[Theorem~2.3.10]{clarke1990optimization} and sum rule \citep[Proposition~2.3.3]{clarke1990optimization} expand~\eqref{eq:app-pl-clarke-inner} to
\begin{equation}\label{eq:app-pl-clarke-expanded}
  0 \in \nabla F(x^k)^\top \partialC c(\ell^k(z^{k+1})) + \rho(z^{k+1}-x^k).
\end{equation}

\begin{theorem}[Clarke stationarity at accumulation points]\label{thm:app-pl-clarke}
Under Assumptions~\ref{ass:app-pl-clarke-c}--\ref{ass:app-pl-clarke-rho} and~\ref{ass:app-pl-bdd}, suppose each $z^{k+1}$ satisfies~\eqref{eq:app-pl-clarke-inner} and~\eqref{eq:pl-model-decrease}.
Then every accumulation point $x^*$ of $\{x^k\}$ satisfies $0 \in \partialC h(x^*)$, i.e., Clarke stationarity of $h=c\circ F$.
\end{theorem}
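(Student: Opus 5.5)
We follow the template of the proof of Theorem~\ref{thm:app-pl-dc}, replacing convex subdifferentials by Clarke subdifferentials. First, by Lemma~\ref{lem:pl-descent} with $\rho > L_c L_F$, together with greedy selection $h(x^{k+1}) \le h(z^{k+1})$ and boundedness below of $h$ (Assumption~\ref{ass:app-pl-clarke-rho}), telescoping gives $\sum_k \norm{z^{k+1}-x^k}^2<\infty$. Hence $\norm{z^{k+1}-x^k}\to 0$. Monotonicity $h(x^{k+1})\le h(x^k)$ and Assumption~\ref{ass:app-pl-bdd} keep $\{x^k\}$ in a compact set, so accumulation points exist.

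Next, fix an accumulation point $x^*$ and a subsequence $x^{k_j}\to x^*$. Then $z^{k_j+1}\to x^*$, and $\nabla F(x^{k_j})\to \nabla F(x^*)$ by continuity of $\nabla F$. Setting $y^j := \ell^{k_j}(z^{k_j+1})$, we get $y^j\to F(x^*)$. From the expanded inclusion \eqref{eq:app-pl-clarke-expanded}, pick $\zeta^j\in\partialC c(y^j)$ with
\[
\nabla F(x^{k_j})^\top \zeta^j + \rho(z^{k_j+1}-x^{k_j}) = 0.
\]
Since $c$ is globally $L_c$-Lipschitz, $\norm{\zeta^j}\le L_c$. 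Passing to a further subsequence, $\zeta^j\to\zeta^*$. Outer semicontinuity (closed graph) of the Clarke subdifferential of a locally Lipschitz function \citep[Proposition~2.1.5]{clarke1990optimization} then gives $\zeta^*\in\partialC c(F(x^*))$. Letting $j\to\infty$ and using that the proximal term vanishes yields $0=\nabla F(x^*)^\top\zeta^*$, so
\[
0\in \nabla F(x^*)^\top \partialC c(F(x^*)).
\]

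Finally, we identify the right-hand side with $\partialC h(x^*)$. By the Clarke chain rule \citep[Theorem~2.3.10]{clarke1990optimization}, $\partialC (c\circ F)(x^*)\subseteq \nabla F(x^*)^\top\partialC c(F(x^*))$ in general, and equality holds when $\nabla F(x^*)$ is surjective, which Assumption~\ref{ass:app-pl-clarke-F} guarantees. Therefore $0\in\partialC h(x^*)$.

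This last step is the main obstacle. Without surjectivity only the inclusion $\subseteq$ is available, and that direction does not transfer the zero into $\partialC h(x^*)$. I would invoke the equality case of Theorem~2.3.10 directly at $x^*$. That theorem covers $F$ strictly differentiable with onto derivative, and $C^1$ implies strict differentiability, so the hypotheses are met. The same equality is what was already used at $x^k$ to justify \eqref{eq:app-pl-clarke-expanded}; there it was applied to the affine map $\ell^k$, for which the chain rule with surjective Jacobian is immediate.
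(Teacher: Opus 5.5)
Your proposal is correct and follows the paper's proof step for step: summable residuals from Lemma~\ref{lem:pl-descent} plus greedy selection, Clarke subgradients of $c$ at $\ell^{k_j}(z^{k_j+1})$ that stay bounded along the subsequence, the closed graph of $\partialC c$ in the limit, and the equality case of Clarke's chain rule (Theorem~2.3.10) under surjectivity of $\nabla F(x^*)$. One small aside: the expansion \eqref{eq:app-pl-clarke-expanded} at $x^k$ needs only the inclusion directions of the sum and chain rules, so surjectivity is genuinely needed only at $x^*$.
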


\begin{proof}
Greedy monotonicity and Assumption~\ref{ass:app-pl-bdd} imply that $\{x^k\}$ is bounded, hence it has accumulation points.
Let $x^{k_j} \to x^*$.
Lemma~\ref{lem:pl-descent} and greedy selection give $\norm{z^{k_j+1}-x^{k_j}}\to 0$, hence $z^{k_j+1} \to x^*$.
Choose $v^{k_j+1} \in \partialC c(\ell^{k_j}(z^{k_j+1}))$ with $\nabla F(x^{k_j})^\top v^{k_j+1} + \rho(z^{k_j+1}-x^{k_j})=0$.
The Clarke subdifferential is locally bounded \citep[Proposition~2.1.2(a)]{clarke1990optimization}; pass to a subsequence with $v^{k_j+1} \to v^*$, and outer semicontinuity of $\partialC c$ yields $v^* \in \partialC c(F(x^*))$.
Taking limits, $\nabla F(x^*)^\top v^* = 0$, i.e., $0 \in \nabla F(x^*)^\top \partialC c(F(x^*))$.
Under surjectivity, the Clarke chain rule for $c\circ F$ \citep[Theorem~2.3.10]{clarke1990optimization} gives $\partialC h(x^*) = \nabla F(x^*)^\top \partialC c(F(x^*))$ (equality), hence $0 \in \partialC h(x^*)$.
\end{proof}

\subsection{Mordukhovich (limiting) outer function}

\paragraph{Inner termination (Mordukhovich stationarity of the model).}
Replace~\eqref{eq:app-pl-clarke-inner} by
\begin{equation}\label{eq:app-pl-mord-inner}
  0 \in \partialL\!\Bigl( c \circ \ell^k + \tfrac{\rho}{2}\norm{\,\cdot - x^k}^2 \Bigr)(z^{k+1}),
\end{equation}
with~\eqref{eq:pl-model-decrease}, where $\partialL$ denotes the limiting (Mordukhovich) subdifferential.
Under surjectivity of $\nabla F(x^k)$, the limiting chain rule \citep[Exercise~10.7]{RockafellarWets2009} and the smooth-sum rule for $\partialL$ \citep[Exercise~10.10]{RockafellarWets2009} yield the expansion
\begin{equation}\label{eq:app-pl-mord-expanded}
  0 \in \nabla F(x^k)^\top \partialL c(\ell^k(z^{k+1})) + \rho(z^{k+1}-x^k).
\end{equation}

\begin{theorem}[Mordukhovich stationarity at accumulation points]\label{thm:app-pl-mord}
Under Assumptions~\ref{ass:app-pl-clarke-c}--\ref{ass:app-pl-clarke-rho} and~\ref{ass:app-pl-bdd}, suppose each $z^{k+1}$ satisfies~\eqref{eq:app-pl-mord-inner} and~\eqref{eq:pl-model-decrease}.
Then every accumulation point $x^*$ of $\{x^k\}$ satisfies $0 \in \partialL h(x^*)$, i.e., Mordukhovich stationarity of $h=c\circ F$.
\end{theorem}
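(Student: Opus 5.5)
This follows the template of the proof of \Cref{thm:app-pl-clarke}, replacing Clarke calculus with limiting calculus. The target is Mordukhovich stationarity $0 \in \partialL h(x^*)$ at every accumulation point.

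\emph{Vanishing residual.} Greedy monotonicity and Assumption~\ref{ass:app-pl-bdd} keep $\{x^k\}$ in a compact set, so accumulation points exist. Fix a convergent subsequence $x^{k_j} \to x^*$. Lemma~\ref{lem:pl-descent}, combined with $h(x^{k+1}) \le h(z^{k+1})$ and boundedness below (Assumption~\ref{ass:app-pl-clarke-rho}), gives $\sum_k \norm{z^{k+1}-x^k}^2<\infty$. Hence $z^{k_j+1}\to x^*$. Consequently
\[
y^j := \ell^{k_j}(z^{k_j+1}) = F(x^{k_j}) + \nabla F(x^{k_j})(z^{k_j+1}-x^{k_j}) \to F(x^*),
\]
using continuity of $F$ and $\nabla F$.

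\emph{Extracting multipliers.} By the expanded inclusion \eqref{eq:app-pl-mord-expanded}, I pick $v^j \in \partialL c(y^j)$ with $\nabla F(x^{k_j})^\top v^j = -\rho(z^{k_j+1}-x^{k_j})$. Since $c$ is globally $L_c$-Lipschitz, every limiting subgradient has norm at most $L_c$. This holds because regular subgradients of an $L_c$-Lipschitz function are bounded by $L_c$, and the bound passes to limits. So $\{v^j\}$ is bounded, and along a further subsequence $v^j \to v^*$.

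\emph{Closing the argument.} The main point to check is that $v^* \in \partialL c(F(x^*))$. The limiting subdifferential is outer semicontinuous only with respect to $c$-attentive convergence, so I also need $c(y^j)\to c(F(x^*))$. This is automatic here because $c$ is continuous, being Lipschitz. The graph-closedness property of $\partialL$ \citep[Proposition~8.7]{RockafellarWets2009} then gives $v^*\in\partialL c(F(x^*))$. Passing to the limit in the multiplier identity yields $\nabla F(x^*)^\top v^* = 0$, that is, $0 \in \nabla F(x^*)^\top \partialL c(F(x^*))$. Finally, $\nabla F(x^*)$ is surjective by Assumption~\ref{ass:app-pl-clarke-F}. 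The limiting chain rule \citep[Exercise~10.7]{RockafellarWets2009} then gives $\partialL h(x^*) = \nabla F(x^*)^\top \partialL c(F(x^*))$; the inclusion "$\supseteq$" alone suffices. Hence $0 \in \partialL h(x^*)$.

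The step I expect to be most delicate is this last one. Unlike the convex and Clarke cases, the limiting subdifferential is not convex-valued, so I must verify that the closedness and chain rule are applied in the correct (attentive, full-rank) form. I would first confirm that surjectivity of $\nabla F$ gives the basic constraint qualification required by the chain rule.
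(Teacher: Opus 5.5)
Your proposal is correct and follows the paper's proof essentially step for step. You use the same chain: vanishing residual via Lemma~\ref{lem:pl-descent} and greedy selection, multipliers $v^j \in \partialL c(\ell^{k_j}(z^{k_j+1}))$ extracted from \eqref{eq:app-pl-mord-expanded}, compactness and outer semicontinuity, and then the full-rank limiting chain rule \citep[Exercise~10.7]{RockafellarWets2009}. The differences are minor: you bound limiting subgradients directly by $L_c$ rather than via $\partialL c \subseteq \partialC c$ and local boundedness of $\partialC c$, and you spell out the $c$-attentive convergence (automatic since $c$ is continuous) that the paper's appeal to outer semicontinuity leaves implicit.
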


\begin{proof}
Greedy monotonicity and Assumption~\ref{ass:app-pl-bdd} imply that $\{x^k\}$ is bounded, hence it has accumulation points.
Let $x^{k_j} \to x^*$; by Lemma~\ref{lem:pl-descent} and greedy selection, $\norm{z^{k_j+1}-x^{k_j}}\to 0$, hence $z^{k_j+1} \to x^*$.
Pick $v^{k_j+1} \in \partialL c(\ell^{k_j}(z^{k_j+1}))$ with $\nabla F(x^{k_j})^\top v^{k_j+1} + \rho(z^{k_j+1}-x^{k_j})=0$.
Since $\partialL c(y) \subseteq \partialC c(y)$ for locally Lipschitz $c$ \citep[Theorems~8.49 and~9.13]{RockafellarWets2009} and $\partialC c$ is locally bounded, the $v^{k_j+1}$ are bounded; pass to a convergent subsequence $v^{k_j+1} \to v^*$.
Outer semicontinuity of $\partialL c$ gives $v^* \in \partialL c(F(x^*))$, and taking limits yields $0 \in \nabla F(x^*)^\top \partialL c(F(x^*))$.
Under surjectivity, $\partialL h(x^*) = \nabla F(x^*)^\top \partialL c(F(x^*))$ \citep[Exercise~10.7]{RockafellarWets2009}, hence $0 \in \partialL h(x^*)$.
\end{proof}

\begin{remark}[Comparison]\label{rem:app-pl-compare}
The descent step is common (Lemma~\ref{lem:pl-descent}); only the subdifferential calculus in the limit and the final chain rule differ.
Since $\partialL c(y) \subseteq \partialC c(y)$ always, Mordukhovich stationarity of $h$ is stronger than Clarke stationarity, and~\eqref{eq:app-pl-mord-inner} is a stronger inner requirement than~\eqref{eq:app-pl-clarke-inner}.
Assumption~\ref{ass:app-pl-bdd} is imposed uniformly in Theorems~\ref{thm:app-pl-dc}--\ref{thm:app-pl-mord} so that greedy iterates admit accumulation points; it is not needed for the algebraic form of the limit along a convergent subsequence.
\end{remark}


\section{Additional Experimental Details and Discussion}
\label{sec:appendix-experiments-v2}

The purpose of this appendix is to document, in one place, when the exploration step
changes the outcome and whether those changes are favorable. Across all experiments, we
compare a deterministic base oracle to its augmented
version (oracle step plus one randomized exploration step), using the same initialization
per instance and three augmentation seeds. We report win/tie/loss against the base
solution using the signed gap
\[
  \Delta \;=\; h_{\mathrm{base}} - \mathrm{median}_s\, h_{\mathrm{aug},s},
\]
with tolerance $10^{-12}$. Positive $\Delta$ means augmentation is better. In addition to
mean and median gap (wins), we report McNemar's exact one-sided $p$-value on discordant
pairs (wins vs.\ losses), which directly tests the directional claim
``augmentation helps more often than it hurts.'' This choice is deliberate: our data
contain many ties in regimes where the base oracle is already strong, and tied-heavy
settings can make signed-rank tests less aligned with this directional question.

\paragraph{Shared mechanics.}
All experiments use $C=\mathbb{R}^n$, so no projection is needed. The exploration step
uses $\gamma=1$, $r=1$, accepts a candidate $x+\hat t v$ when
\[
  h(x+\hat t v)+\tfrac{\gamma}{2}\hat t^2 < h(x),
\]
and then greedily keeps the better of the oracle point and accepted exploration point.
The step size is sampled as $\hat t \sim \mathrm{Unif}([0,r])$ in all reported experiments.
The sphere sampler uses $v\sim \mathrm{Unif}(\mathbb{S}^{n-1})$. The gauss-axis sampler
first chooses a coordinate index $i$ uniformly, then samples
$g\sim \mathcal{N}(0, I + (\mu^2-1)e_i e_i^\top)$, and sets $v=g/\|g\|$; at $\mu=1$ this
reduces to sphere sampling, while larger $\mu$ concentrates samples near coordinate axes.
Whenever both samplers are run, we state explicitly which sampler is reported in the
aggregate table.

\subsection{D-stationarity diagnostics}
\label{subsec:exp-dstat-checks}

The counts ``\# non-d-stat'' in \Cref{tab:experiments-story-summary} flag terminal
iterates that fail a numerical stationarity test run once per random instance (on
the final iterate of each run). The three benchmarks use different objectives, so
the stationarity checks differ; each is defined in the following paragraphs.

\paragraph{Trimmed lasso (exact DC inclusion test).}
We use the standard DC decomposition
$h(x)=\tfrac12\|Ax-b\|^2+\lambda\|x\|_1-\lambda\,\mathrm{top}_k(|x|)=f(x)-g(x)$ with
$f,g$ convex. For such a model, $x$ is d-stationary if and only if
$\partial g(x)\subseteq \partial f(x)$ \citep{deOliveira2020}, where $\partial f(x)$ and
$\partial g(x)$ denote the convex subdifferentials of $f$ and $g$ at $x$. This inclusion is
equivalent to a single nonnegative scalar being zero: writing
$\mathrm{dist}_2(u,S)\mathrel{:=}\inf_{v\in S}\|u-v\|_2$ for nonempty closed convex
$S\subseteq\R^d$ and $u\in\R^d$ (in particular $d=1$ for each coordinate projection below),
\begin{equation}\label{eq:tl-dstat-dist-sup}
  \partial g(x)\subseteq \partial f(x)
  \quad\Longleftrightarrow\quad
  \sup_{\xi\in\partial g(x)} \mathrm{dist}_2\!\bigl(\xi,\,\partial f(x)\bigr)=0,
\end{equation}
where the supremum is taken over all subgradients $\xi\in\partial g(x)$ (the set
$\partial g(x)$ is compact for this $g$). The diagnostic below evaluates the right-hand
quantity in~\eqref{eq:tl-dstat-dist-sup} in closed form: every $\xi\in\partial g(x)$ is
determined by a top-$k$ support pattern for the magnitudes $|x_i|$, and the distance to
$\partial f(x)$ decouples across coordinates because $\partial f(x)$ is a Cartesian product
of the one-dimensional sets in~\eqref{eq:tl-partial-f}.

Write the residual gradient and the $k$th largest magnitude threshold as
\begin{equation*}
  r \mathrel{:=} A^\top(Ax-b)\in\R^n,
  \qquad
  \tau \mathrel{:=} |x|_{(k)},
\end{equation*}
where $|x|_{(k)}$ denotes the $k$th order statistic of $(|x_1|,\ldots,|x_n|)$ (so
$\mathrm{top}_k(|x|)=\sum_{j=1}^k |x|_{(j)}$). The convex subdifferential of
$f(x)=\tfrac12\|Ax-b\|^2+\lambda\|x\|_1$ is a Cartesian product of one-dimensional
sets; equivalently, its $i$th coordinate projection is
\begin{equation}\label{eq:tl-partial-f}
  (\partial f(x))_i \;=\;
  \begin{cases}
    \{\, r_i + \lambda\,\mathrm{sgn}(x_i)\,\}, & x_i \neq 0,\\[2pt]
    \bigl[\, r_i-\lambda,\; r_i+\lambda \,\bigr], & x_i = 0.
  \end{cases}
\end{equation}
Fix a tie tolerance $\delta_{\mathrm{tie}}\mathrel{:=}10^{-10}$ and define the
top-$k$ magnitude partition
\begin{equation*}
\begin{aligned}
  S_{\mathrm{high}} &\mathrel{:=} \{\, i : |x_i| > \tau + \delta_{\mathrm{tie}} \,\},\\
  S_{\mathrm{tie}} &\mathrel{:=} \{\, i : \bigl| |x_i|-\tau \bigr| \le \delta_{\mathrm{tie}} \,\},\\
  S_{\mathrm{low}} &\mathrel{:=} \{\, i : |x_i| < \tau - \delta_{\mathrm{tie}} \,\}.
\end{aligned}
\end{equation*}
When $|S_{\mathrm{tie}}|=0$ (equivalently, the top-$k$ magnitude set is uniquely determined
up to the tolerance $\delta_{\mathrm{tie}}$, so there is no nontrivial ambiguity at the
cutoff $\tau$), $\partial g(x)$ is a singleton $\{\xi\}$. The unique $\xi\in\partial g(x)$ has coordinates
$\xi_i=\lambda\,\mathrm{sgn}(x_i)$ if $i$ lies among the $k$ largest magnitudes and
$\xi_i=0$ otherwise. For this $\xi$, the coordinate projections of $\partial f(x)$ in~\eqref{eq:tl-partial-f}
yield $\delta_i \mathrel{:=} \mathrm{dist}_2\!\bigl(\xi_i,(\partial f(x))_i\bigr)$, and
\begin{equation*}
  \sup_{\eta\in\partial g(x)} \mathrm{dist}_2\!\bigl(\eta,\,\partial f(x)\bigr)
  \;=\; \mathrm{dist}_2\!\bigl(\xi,\,\partial f(x)\bigr)
  \;=\; \sqrt{\sum_{i=1}^n \delta_i^2}
  \;=:\; \mathrm{gap}(x),
\end{equation*}
the second equality being the product geometry of~\eqref{eq:tl-partial-f}. We declare
\emph{pass} if $\delta_i\le \varepsilon$ for every $i$, with
\begin{equation*}
  \varepsilon \mathrel{:=} 10^{-6}.
\end{equation*}

When $|S_{\mathrm{tie}}|>0$ and the top-$k$ support is genuinely ambiguous (equivalently,
$|S_{\mathrm{high}}|<k<|S_{\mathrm{high}}|+|S_{\mathrm{tie}}|$), the set $\partial g(x)$ is a
polytope generated by choosing which $n_{\mathrm{pick}}\mathrel{:=}k-|S_{\mathrm{high}}|$
indices inside $S_{\mathrm{tie}}$ join the mandatory indices $S_{\mathrm{high}}$ in the
top-$k$ magnitude set. On $S_{\mathrm{high}}$ and $S_{\mathrm{low}}$ the corresponding
$\xi_i\in\partial g(x)$ are fixed at $\xi_i=\lambda\,\mathrm{sgn}(x_i)$ and $\xi_i=0$,
respectively. In particular, on $S_{\mathrm{low}}$ with $x_i=0$ one has
$\mathrm{dist}_2\bigl(0,(\partial f(x))_i\bigr)=\max\{0,|r_i|-\lambda\}$, whereas on
$S_{\mathrm{low}}$ with $x_i\neq 0$ one has
$\mathrm{dist}_2\bigl(0,(\partial f(x))_i\bigr)=|r_i+\lambda\,\mathrm{sgn}(x_i)|$ because
$(\partial f(x))_i$ is the singleton $\{r_i+\lambda\,\mathrm{sgn}(x_i)\}$ (this is generally
\emph{not} the same as $|r_i|$). On $S_{\mathrm{tie}}$ each admissible vertex uses $p_i\in\{0,1\}$ to indicate
whether coordinate $i$ is included in the top-$k$ support; for each $i\in S_{\mathrm{tie}}$
define the squared coordinate distance if $p_i$ is chosen,
\begin{equation*}
  D_i(p)^2 \mathrel{:=}
  \begin{cases}
    \Bigl(\max\bigl\{\,0,\; |r_i|-\lambda(1-p)\,\bigr\}\Bigr)^2, & x_i = 0,\; p\in\{0,1\},\\[4pt]
    \Bigl(\lambda\,\mathrm{sgn}(x_i)(p-1)-r_i\Bigr)^2, & x_i \neq 0,\; p\in\{0,1\},
  \end{cases}
\end{equation*}
where the lower branch is the squared distance from
$\xi_i=\lambda\,\mathrm{sgn}(x_i)\,p$ to the singleton $(\partial f(x))_i=\{r_i+\lambda\,\mathrm{sgn}(x_i)\}$.
Because $(\partial f(x))_i$ is an interval (or a point) independently across $i$, for any
$\xi\in\partial g(x)$ compatible with tie indicators $(p_i)_{i\in S_{\mathrm{tie}}}$ one has
\begin{equation*}
  \mathrm{dist}_2\!\bigl(\xi,\,\partial f(x)\bigr)^2 \;=\; \sum_{i=1}^n D_i(p_i)^2,
\end{equation*}
with the understanding that $p_i$ is fixed on $S_{\mathrm{high}}\cup S_{\mathrm{low}}$ by the
forced pattern ($p_i=1$ on $S_{\mathrm{high}}$, $p_i=0$ on $S_{\mathrm{low}}$) and only the tied
coordinates vary. Thus maximizing $\mathrm{dist}_2(\xi,\partial f(x))$ over $\xi\in\partial g(x)$
reduces to maximizing this separable squared distance over the tie polytope. The maximizer is
attained at a $0/1$ vertex; it is found by the greedy rule
$M_i\mathrel{:=}D_i(1)^2-D_i(0)^2$, sort $\{M_i\}_{i\in S_{\mathrm{tie}}}$ in decreasing order,
set $p_i=1$ on the first $n_{\mathrm{pick}}$ indices and $p_i=0$ on the remaining tied
indices. With $(p_i)$ fixed, let $\xi\in\partial g(x)$ be the corresponding vertex, set
$\delta_i$ on $S_{\mathrm{tie}}$ to $\sqrt{D_i(p_i)^2}$, combine with the
already-fixed coordinates on $S_{\mathrm{high}}$ and $S_{\mathrm{low}}$, and define
\begin{equation*}
  \mathrm{gap}(x) \mathrel{:=} \sqrt{\sum_{i=1}^n \delta_i^2}
  \;=\; \sup_{\eta\in\partial g(x)} \mathrm{dist}_2\!\bigl(\eta,\,\partial f(x)\bigr).
\end{equation*}
The same pass criterion $\delta_i\le \varepsilon$ coordinatewise with $\varepsilon=10^{-6}$
certifies $\mathrm{gap}(x)=0$ up to floating-point tolerance, hence~\eqref{eq:tl-dstat-dist-sup}.

\paragraph{Least trimmed squares (exact smooth-$f$ DC test).}
Here $h(x)=\tfrac12\|Ax-b\|^2-\tfrac12\,\mathrm{top}_q((Ax-b)^2)=f(x)-g(x)$ with $f$
smooth convex and $g$ convex. In this template, d-stationarity is equivalent to
$g$ being differentiable at $x$ and satisfying $\nabla f(x)=\nabla g(x)$: if $g$ is
not differentiable, $\partial g(x)$ is not a singleton and cannot be contained in
the singleton $\partial f(x)=\{\nabla f(x)\}$. We form squared residuals
$r_i^2$, partition indices into above-, at-, and below-threshold groups relative
to the top-$q$ cutoff, and set $\nabla f(x)=A^\top r$. If there is no residual tie,
the top-$q$ active set is unique, $\nabla g(x)$ is explicit, and the gap is
$\|\nabla f(x)-\nabla g(x)\|_2$. If there are ties, we first test
one admissible top-$q$ completion; if it already violates gradient match beyond
$10^{-6}$, the point is declared non-d-stationary immediately. Otherwise we measure
nondifferentiability by the maximum, over admissible completions $T$ of the tied
block, of $\|\nabla g_T(x)-\nabla g_{T_{\mathrm{ref}}}(x)\|_2$, where
$\nabla g_T(x)=A^\top w$ with $w_i=r_i$ on $T$ and $w_i=0$ off $T$. Exhaustive
enumeration is used when the number of completions is at most $8192$; for larger tie
sets a deterministic pair of extreme completions plus $256$ random completions
(estimating the witness). The acceptance tolerance is $10^{-6}$ on the resulting gap.

\paragraph{ReLU regression (exact DC inclusion check).}
For ReLU regression we now use an explicit DC decomposition and test d-stationarity
through convex-subdifferential inclusion. With
\begin{equation*}
  h(x)=\tfrac12\sum_{i=1}^m\bigl(\max\{0,a_i^\top x\}-b_i\bigr)^2,
\end{equation*}
define $h=f-g$ by
\begin{equation*}
\begin{aligned}
  f(x) &\mathrel{:=} \tfrac12\sum_{i=1}^m\bigl(\max\{0,a_i^\top x\}\bigr)^2
  + \sum_{i=1}^m \alpha_i \max\{0,a_i^\top x\},\\
  g(x) &\mathrel{:=} \sum_{i=1}^m \beta_i \max\{0,a_i^\top x\},
\end{aligned}
\end{equation*}
where $\beta_i \mathrel{:=} b_i$ for $b_i>0$ and $0$ otherwise, and
$\alpha_i \mathrel{:=} -b_i$ for $b_i<0$ and $0$ otherwise. Then
$\alpha_i\beta_i=0$ for all $i$, and d-stationarity is the DC condition
$\partial g(x)\subseteq\partial f(x)$.

Write $z\mathrel{:=}Ax$, and use the numerical kink bandwidth
\begin{equation*}
  \tau_{\mathrm{kink}}(x) \mathrel{:=} 10^{-6}\,\max\bigl\{\,1,\;\|z\|_\infty\,\bigr\}.
\end{equation*}
Set $I_+(x)\mathrel{:=}\{i:z_i>\tau_{\mathrm{kink}}(x)\}$ and
$I_0(x)\mathrel{:=}\{i:|z_i|\le\tau_{\mathrm{kink}}(x)\}$, and split
$I_0(x)=I_{0,\alpha}(x)\sqcup I_{0,\beta}(x)$ with
\begin{equation*}
  I_{0,\alpha}(x)\mathrel{:=}\{i\in I_0(x):\alpha_i>0\},
  \qquad
  I_{0,\beta}(x)\mathrel{:=}\{i\in I_0(x):\beta_i>0\}.
\end{equation*}
Define
\begin{equation*}
\begin{aligned}
  g_0(x) &\mathrel{:=} \sum_{i\in I_+(x)} \beta_i a_i,\\
  f_0(x) &\mathrel{:=} \sum_{i\in I_+(x)} \bigl(z_i+\alpha_i\bigr)a_i,\\
  D(x)   &\mathrel{:=} g_0(x)-f_0(x),
\end{aligned}
\end{equation*}
and generator matrices
\begin{equation*}
  G_\beta(x)\mathrel{:=}\bigl[\beta_i a_i\bigr]_{i\in I_{0,\beta}(x)},
  \qquad
  F_\alpha(x)\mathrel{:=}\bigl[\alpha_i a_i\bigr]_{i\in I_{0,\alpha}(x)}.
\end{equation*}
Then $\partial g(x)\subseteq\partial f(x)$ is equivalent to
\begin{equation*}
  D(x)+G_\beta(x)\lambda \in \bigl\{F_\alpha(x)\mu:\mu\in[0,1]^{|I_{0,\alpha}(x)|}\bigr\}
  \quad \forall \lambda\in[0,1]^{|I_{0,\beta}(x)|}.
\end{equation*}
The implemented diagnostic evaluates the exact vertex form of this condition:
\begin{equation*}
  \Gamma_{\mathrm{ReLU}}(x)\mathrel{:=}
  \max_{\lambda\in\{0,1\}^{|I_{0,\beta}(x)|}}
  \min_{\mu\in[0,1]^{|I_{0,\alpha}(x)|}}
  \bigl\|D(x)+G_\beta(x)\lambda-F_\alpha(x)\mu\bigr\|_2.
\end{equation*}
Each inner problem is a box-constrained least-squares subproblem, and the outer
maximum is over binary vertices. We declare \emph{pass} iff
\begin{equation*}
  \Gamma_{\mathrm{ReLU}}(x)\le 10^{-6}.
\end{equation*}
When $|I_{0,\beta}(x)|$ exceeds the implementation cap (set to $16$), the run is marked
non-d-stationary for reporting rather than using an uncontrolled approximation.

\subsection{Trimmed Lasso: Resolving the Failure Mode}

The trimmed-lasso objective
\[
  h(x)=\tfrac12\|Ax-b\|^2+\lambda\bigl(\|x\|_1-\mathrm{top}_k|x|\bigr)
\]
is the regime where plain DCA is known to converge to critical points that can fail
d-stationarity. We focus on $\lambda=1$ with
$m=50$, $n=100$, $k=5$, noise std.\ $0.1$, and $N=5000$ outer iterations.
Both samplers are run; the aggregate table reports gauss-axis ($\mu=300$).

\begin{center}
\begin{tabular}{l c c c c c c}
\toprule
$\lambda$ & $n$ & win/tie/loss & mean gap & median gap (wins) & McNemar $p$ & \# non-d-stat (base/aug) \\
\midrule
1.00 & 100 & 99/1/0 & +2.94e-01 & +2.69e-01 & 1.58e-30 & 99/4 \\
\bottomrule
\end{tabular}

\end{center}

The central empirical message is clear: DCA fails the d-stationarity diagnostic on
$99/100$ instances, while augmentation with gauss-axis reduces this to $4/100$.
At the objective level, augmentation wins on $99$ instances, ties on one, and never
loses. McNemar's exact one-sided test gives $p\approx 1.6\times 10^{-30}$.

Sphere runs were still checked for completeness, but we do not plot
them here because they add little visual information in this focused regime:
the sphere variant accepts no exploration moves and is practically indistinguishable
from DCA. Instead, we show only the gauss-axis plots in
\Cref{fig:v2-tl-scatter,fig:v2-tl-trajectory}.

The reason this happens is structural. In high dimension, a random sphere direction has
typical $\ell_1$ mass of order $\sqrt{n}$ after $\ell_2$ normalization. For trimmed
lasso, the nonconvex sparsity term is driven by coordinate-level behavior, so this
``diffuse'' direction tends to spread motion over many coordinates and rarely aligns with
the sparse descent geometry near stuck points. The acceptance test then almost never
fires in finite budgets. The axis-biased Gaussian mixture is designed to correct exactly
this mismatch: each sampled direction is still absolutely continuous on the sphere and
keeps full support (so the theorem assumptions are unchanged), but it places much larger
probability on directions close to coordinate axes, which are precisely the directions
that can unlock descent for this $\ell_1$-structured objective. Empirically, this
sampler-design change is the difference between inert exploration and systematic
improvement.

\begin{figure}[t]
\centering
\includegraphics[width=0.58\linewidth]{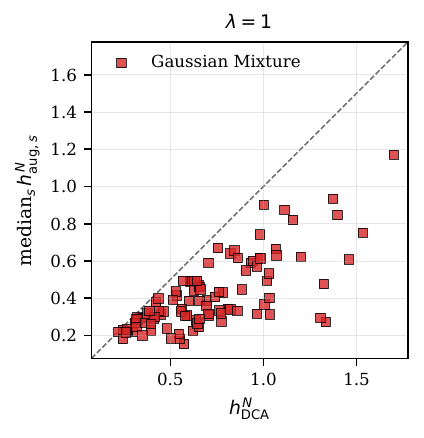}
\caption{Trimmed lasso scatter at $\lambda=1$. Horizontal axis: final DCA objective.
Vertical axis: median augmented objective over three seeds. Points below the diagonal
favor augmentation. (Sphere sampler omitted from display: in this regime it produces no
accepted exploration moves and overlays DCA.)}
\label{fig:v2-tl-scatter}
\end{figure}

\begin{figure}[t]
\centering
\includegraphics[width=0.62\linewidth]{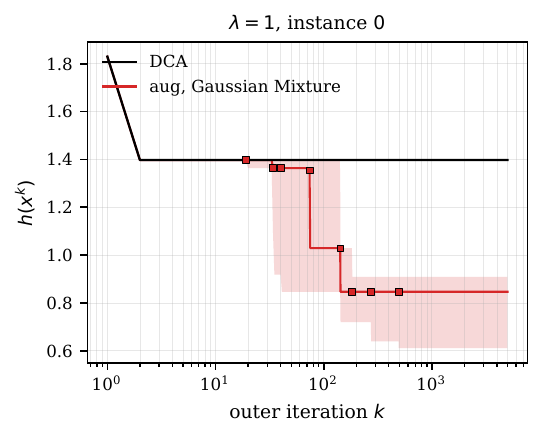}
\caption{Trimmed lasso trajectory on a representative instance. Black: DCA.
Colored band/curve: min/median/max over augmented seeds. The Gaussian-mixture sampler
breaks away from the DCA plateau. (Sphere sampler omitted from display: trajectory
essentially overlays DCA.)}
\label{fig:v2-tl-trajectory}
\end{figure}

\subsection{Least Trimmed Squares: Non-Disruptive Behavior with Occasional Escapes}

For least trimmed squares,
\[
  h(x)=\tfrac12\|Ax-b\|^2-\tfrac12\,\mathrm{top}_q((Ax-b)^2),
\]
we focus on $\sigma_{\mathrm{clean}}=4$ with
$m=100$, $n=50$, $q=10$, outlier std.\ $10$, and $N=5000$.
For the focused LTS report, exploration directions are sampled from the sphere
distribution $v\sim \mathrm{Unif}(\mathbb{S}^{n-1})$.

\begin{center}
\begin{tabular}{l c c c c c c}
\toprule
$\sigma_{\mathrm{clean}}$ & $n$ & win/tie/loss & mean gap & median gap (wins) & McNemar $p$ & \# non-d-stat (base/aug) \\
\midrule
4.00 & 100 & 9/91/0 & +3.96e+00 & +5.22e+01 & 1.95e-03 & 0/0 \\
\bottomrule
\end{tabular}

\end{center}

This experiment supports a different part of the story. Here the base DCA oracle is
already reliable in first-order terms: both base and augmented runs have zero
d-stationarity failures in the focused setting. Augmentation is therefore expected to be
mostly neutral, and that is exactly what we observe ($91$ ties). Yet on the discordant
subset, all outcomes favor augmentation ($9$ wins, $0$ losses), yielding
$p\approx 2.0\times 10^{-3}$ by McNemar's exact test. In short, augmentation is
non-disruptive where the oracle already behaves well, but can still find a better basin
when one is nearby.

\begin{figure}[t]
\centering
\includegraphics[width=0.45\linewidth]{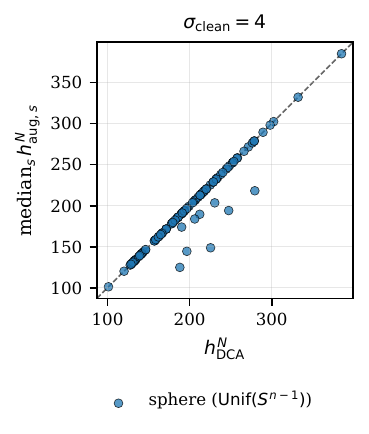}
\caption{LTS scatter at $\sigma_{\mathrm{clean}}=4$ (sphere reporting).
Most points lie on the diagonal (ties), with a smaller set below (augmentation wins).}
\label{fig:v2-lts-scatter}
\end{figure}

\begin{figure}[t]
\centering
\includegraphics[width=0.6\linewidth]{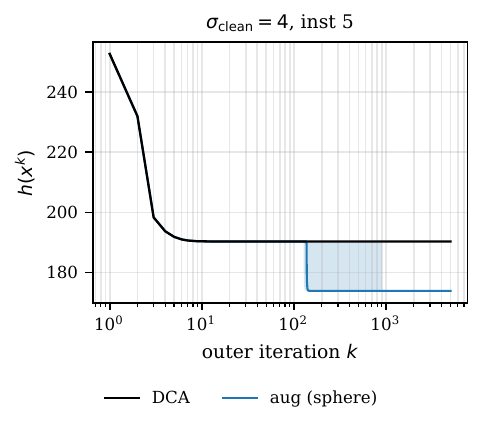}
\caption{LTS representative trajectory where augmentation improves over the DCA plateau.}
\label{fig:v2-lts-trajectory}
\end{figure}

\subsection{ReLU Regression: Transfer Beyond DC Oracles}

The ReLU experiment is intentionally outside the DC setup:
\[
  h(x)=\tfrac12\sum_i\bigl(\mathrm{relu}(a_i^\top x)-b_i\bigr)^2,
\]
optimized by a prox-linear oracle ($\rho_{\mathrm{prox}}=0.1$), then augmented by the
same exploration mechanism used elsewhere. We aggregate two focused settings,
$(q,\rho_b)\in\{(0.2,2),(0.4,2)\}$, with $100$ instances per setting
($200$ paired comparisons total), $m=200$, $n=50$, and $N_{\mathrm{outer}}=1000$.
For the focused ReLU report, exploration directions are likewise sampled as
$v\sim \mathrm{Unif}(\mathbb{S}^{n-1})$.

\begin{center}
\begin{tabular}{l c c c c c c}
\toprule
setting & $n$ & win/tie/loss & mean gap & median gap (wins) & McNemar $p$ & \# non-d-stat (base/aug) \\
\midrule
focused sweep & 200 & 44/156/0 & +9.58e-01 & +1.26e+00 & 5.68e-14 & 0/0 \\
\bottomrule
\end{tabular}

\end{center}

As in LTS, augmentation is largely neutral but never harmful in the focused report:
$44$ wins, $156$ ties, $0$ losses, with zero d-stationarity failures for both base and
augmented methods. McNemar's exact one-sided test gives
$p\approx 5.7\times 10^{-14}$. Since the inner oracle here is prox-linear (not DCA), this
result is direct empirical evidence for the paper's oracle-agnostic claim.

\begin{figure}[t]
\centering
\includegraphics[width=0.42\linewidth]{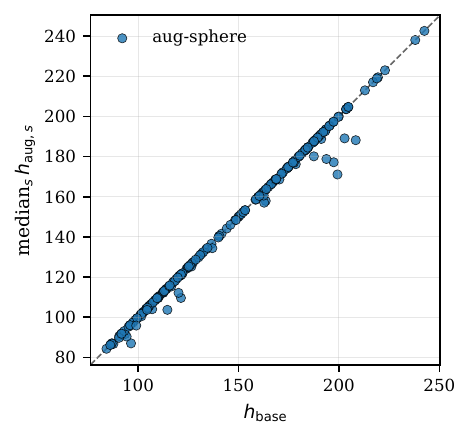}
\caption{ReLU scatter (sphere reporting). Most pairs tie, with a smaller strict-win set.}
\label{fig:v2-relu-scatter}
\end{figure}

\begin{figure}[t]
\centering
\includegraphics[width=0.55\linewidth]{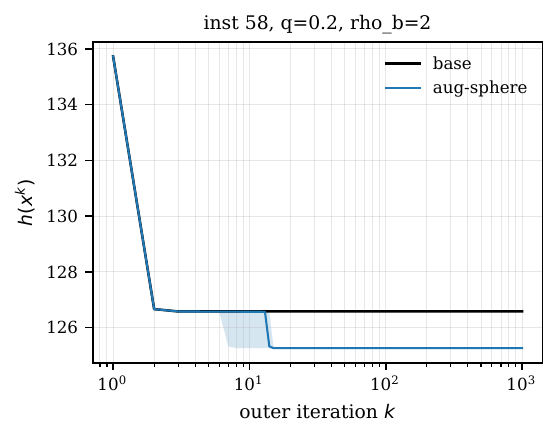}
\caption{ReLU representative trajectory (base prox-linear vs.\ augmented seeds).}
\label{fig:v2-relu-trajectory}
\end{figure}

\subsection{Cross-Experiment Interpretation}

Taken together, the three experiments show a consistent pattern with distinct roles.
Trimmed lasso demonstrates the failure mode and its correction: augmentation converts a
high non-d-stationarity regime into a mostly d-stationary one, with strong objective
improvements. LTS and ReLU demonstrate the complementary regime: when the base oracle is
already first-order sound, augmentation is mostly neutral, never harmful in the focused
runs, and occasionally beneficial.

This is the practical reading of the theoretical separation in the paper.
Oracle choice controls local descent structure and rates; exploration controls asymptotic
d-stationarity behavior in an oracle-agnostic way. The experiments were designed so that
each benchmark contributes one part of that statement, and the combined suite supports it
without requiring any algorithm-specific retuning of the exploration step.


\end{document}